\documentclass[a4paper, 12pt]{article}
\usepackage{amsmath}
\usepackage{amssymb}
\usepackage{enumerate,url}
\usepackage{amsthm}
\usepackage[
      colorlinks=true,    
      urlcolor=blue,    
      menucolor=blue,    
      linkcolor=blue,    
      pagecolor=blue,    
      bookmarks=true,    
      bookmarksopen=true,    
      hyperfootnotes=false,    
      pdfpagemode=UseOutlines    
]{hyperref}

\newtheorem{definition}{Definition}
\newtheorem{theorem}{Theorem}
\newtheorem{lemma}{Lemma}
\newtheorem{proposition}{Proposition}
\newtheorem{corollary}{Corollary}

\newcommand{\R}{\mathbb{R}}
\newcommand{\Z}{\mathbb{Z}}
\newcommand{\Q}{\mathbb{Q}}
\renewcommand{\O}{\mathcal{O}}

\newcommand{\ol}{\overline}

\begin{document}

\title{Sums of units in function fields}

\author{Christopher Frei}
\date{May 17, 2010}
\maketitle

\begin{abstract}
Let $R$ be the ring of $S$-integers of an algebraic function field (in one variable) over a perfect field, where $S$ is finite and not empty. It is shown that for every positive integer $N$ there exist elements of $R$ that can not be written as a sum of at most $N$ units.

Moreover, all quadratic global function fields whose rings of integers are generated by their units are determined.
\end{abstract}

\renewcommand{\thefootnote}{}
\footnote{The original publication is available at \href{http://www.springerlink.com/content/p432163104464636/?p=ec2926e4827144f5a86cc5b400b8d79d&pi=0}{\texttt{www.springerlink.com}}.}
\footnote{DOI: 10.1007/s00605-010-0219-7}
\footnote{2010 \emph{Mathematics Subject Classification}: 12E30; 11R27; 11R58; 11R04}
\footnote{\emph{Key words and phrases}: unit sum number, sums of units, function field}
\renewcommand{\thefootnote}{\arabic{footnote}}
\setcounter{footnote}{0}

\section{Introduction}
The connection between the additive structure and the units of certain rings has achieved some attention in the last years. First investigations in this direction were made by Zelinsky \cite{Zelinsky1954}, who showed that, except for one special case, every linear transformation of a vector space is a sum of two automorphisms, and Jacobson \cite{Jacobson1964}, who showed that in the rings of integers of the number fields $\Q(\sqrt{2})$ and $\Q(\sqrt{5})$ every element can be written as a sum of distinct units. Jacobson's work was extended by {\'S}liwa \cite{Sliwa1974}, who proved that there are no other quadratic number fields with this property, and Belcher \cite{Belcher1974}, \cite{Belcher1975}, who investigated cubic and quartic number fields. 

Goldsmith, Pabst and Scott \cite{Goldsmith1998}, investigated similar questions, but without the requirement that the units be distinct. The following definition from \cite{Goldsmith1998} describes quite precisely how the units of a ring $R$ additively generate $R$.

\begin{definition}
Let $R$ be a ring with identity and $k$ a positive integer. An element $r \in R$ is called $k$-good if there are units $e_1$, $\ldots$, $e_k$ of $R$, such that $r = e_1 + \cdots + e_k$. If every element of $R$ is $k$-good then we call the ring $k$-good as well.

The unit sum number $u(R)$ of $R$ is defined as $\min\{k \mid R$ is $k$-good $\}$, if this minimum exists. If the minimum does not exist, but $R$ is additively generated by its units, then we define $u(R) := \omega$. If the units do not generate $R$ additively then we set $u(R) := \infty$.
\end{definition}

By convention, we put $n < \omega < \infty$, for every integer $n$. The case where $R$ is the ring of integers of an algebraic number field has recently been of particular interest. The fact that no ring of integers of an algebraic number field can have a finite unit sum number was proved by Ashrafi and V{\'a}mos \cite{Ashrafi2005} in some special cases, and  by Jarden and Narkiewicz \cite{Jarden2007} in the general case. It is also a consequence of a result obtained independently by Hajdu \cite{Hajdu2007}. Our first theorem is an analogous result for rings of $S$-integers of algebraic function fields over perfect fields. 

Regarding function fields, we use the notation from \cite{Rosen2002} and \cite{Stichtenoth1993}. In particular, an algebraic function field over a field $K$ is a finitely generated extension $F | K$ of transcendence degree $1$. If $K$ is a finite field then $F | K$ is called a global function field. The algebraic closure of $K$ in $F$ is called the (full) field of constants of $F | K$. Following \cite{Stichtenoth1993}, we regard the places of $F | K$ as the maximal ideals of  discrete valuation rings of $F$ containing $K$. In particular, the places $P$ of $F | K$ correspond to (surjective) discrete valuations $v_P : F \to (\Z \cup \{\infty\})$ of $F$ over $K$. Let $n$ be a positive integer. We say that a place $P$ of $F | K$ is a zero of an element $f \in F$ of order $n$, if $v_P(f) = n > 0$, and $P$ is a pole of $f$ of order $n$, if $v_P(f) = -n < 0$. If $S$ is a finite set of places of $F | K$ then the ring $\O_S$ of $S$-integers of $F$ is the set of all elements of $F$ that have no poles outside of $S$. The $S$-units of $F$ are the units of $\O_S$. As a consequence of the definition of $\O_S$, an element $f \in F$ is an $S$-unit if and only if $v_P(f) = 0$ for all places $P$ outside of $S$. The pole [zero] divisor $(f)_\infty$ [$(f)_0$] of an element $f \in F^*$ is the sum of all poles [zeros] of $f$, taken with their respective multiplicities. The height $H(f)$ of $f$ is defined as the degree of its zero divisor, or, equivalently, as the degree of its pole divisor:
\begin{align*}H(f) := \deg (f)_0 &= \sum_P \max\{0, v_P(f) \deg P\}\\ &= - \sum_P \min\{0, v_P(f) \deg P\} = \deg (f)_\infty\text,\end{align*}
where the sums run over all places $P$ of $F | K$.

The following theorem is basically a consequence of Mason's classical work on unit equations in function fields \cite{Mason1986char0}, \cite{Mason1986}.

\begin{theorem}\label{unit_sum_number_infinite}
Let $K$ be a perfect field, $F | K$ an algebraic function field, and $S \neq \emptyset$ a finite set of places of $F|K$. Denote by $\O_S$ the ring of $S$-integers of $F$. Then, for each positive integer $N$, there exists an element of $\O_S$ that can not be written as a sum of at most $N$ units of $\O_S$. In particular, we have $u(\O_S) \geq \omega$.
\end{theorem}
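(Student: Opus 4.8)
The plan is to prove the first assertion, from which $u(\O_S) \geq \omega$ follows immediately by the definition of the unit sum number: I would show that for every positive integer $N$ the ring $\O_S$ fails to be $N$-good. Fixing $N$, the idea is to exhibit a single element of $\O_S$, of the form $g^M$ for a suitable fixed $g \in \O_S$ and a large exponent $M$, that cannot be written as a sum of at most $N$ units; the contradiction will come from Mason's theorem on unit equations in function fields.

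For the construction, I would first pick a place $Q_0$ of $F|K$ not lying in $S$ (possible, since $F|K$ has infinitely many places whereas $S$ is finite) and, by Riemann--Roch, choose a nonzero $g \in \O_S$ with $v_{Q_0}(g) \geq 1$; such a $g$ is not an $S$-unit. Let $Z_0$ be the finite, nonempty set of zeros of $g$ lying outside $S$, and put $S' := S \cup Z_0$. The reason for passing to $f := g^M$ is that $f$ has precisely the same zeros outside $S$, namely those in $Z_0$, now with each multiplicity multiplied by $M$; thus $S'$ is independent of $M$, whereas the order of vanishing of $f$ at the places of $Z_0$ grows linearly in $M$.

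Next, assume for contradiction that $f = e_1 + \cdots + e_k$ with $e_i \in \O_S^*$ and $k \leq N$, and take such a representation with $k$ minimal. Minimality forces that no proper nonempty subsum of $e_1 + \cdots + e_k$ vanishes, since otherwise the complementary (nonempty) subsum would be a strictly shorter representation of the nonzero element $f$. Dividing by $f$ and writing $u_i := e_i / f$ for $1 \le i \le k$ and $u_{k+1} := -1$, I obtain
\[
 u_1 + \cdots + u_{k+1} = 0 ,
\]
a relation whose $k+1$ terms are all $S'$-units (each $e_i$ is an $S$-unit, and the divisor of $f$ is supported on $S'$) and in which, again, no proper nonempty subsum vanishes. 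Applying Mason's theorem on $S'$-unit equations to this non-degenerate $(k+1)$-term relation --- which is legitimate because $K$ is perfect --- one gets the standard dichotomy: either (a) all quotients $u_i / u_j$ lie in the constant field, or (b) the heights $H(u_i / u_j)$ are all bounded by a constant $B$ depending only on $N$, on $|S'|$, and on the genus of $F$, but not on $M$.

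It remains to rule out both alternatives for $M$ large. In case (a) the $e_i$ are constant multiples of one another, so $f$ is a constant multiple of the unit $e_1$ and hence itself an $S$-unit, contradicting $v_{Q_0}(f) = M\, v_{Q_0}(g) > 0$ with $Q_0 \notin S$. In case (b), every $Q \in Z_0$ satisfies $v_Q(e_i) = 0$ while $v_Q(f) = M\, v_Q(g) \geq M$, so $Q$ is a pole of $u_i = e_i / f$ of order at least $M$; hence $M \leq H(u_i) = H(u_i / u_{k+1}) \leq B$, which is impossible once $M > B$. Therefore, fixing any $M > B$, the element $g^M$ is not a sum of at most $N$ units, and since $N$ was arbitrary this yields $u(\O_S) \geq \omega$. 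I expect the main obstacle to be invoking Mason's theorem in exactly the form needed here --- an effective height bound for vanishing sums of $S$-units having no vanishing proper subsum, together with the constant/non-constant dichotomy, over a perfect but not necessarily algebraically closed field of constants --- and checking that the minimality reduction supplies precisely the non-degeneracy hypothesis the theorem requires.
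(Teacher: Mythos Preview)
Your argument is essentially the paper's characteristic-$0$ proof: choose a non-unit $g\in\O_S$, enlarge $S$ to $S'$ by adjoining the zeros of $g$, write $g^M=\sum_i e_i$ with no vanishing subsum, divide by $g^M$, and bound heights via Mason's theorem to reach a contradiction for large $M$. In characteristic $0$ this works, and matches the paper's Section~2.1.

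The gap is in positive characteristic. The ``standard dichotomy'' you invoke---either all quotients $u_i/u_j$ are constant, or their heights are bounded by a constant depending only on $N$, $|S'|$ and the genus---is simply false when $\mathrm{char}\,K=p>0$. For instance, in $K(t)$ with $S'=\{0,1,\infty\}$ the relation
\[
t^{p^\ell}+(1-t)^{p^\ell}+(-1)=0
\]
has no vanishing proper subsum, the quotients are non-constant, and the heights are $p^\ell$, which is unbounded in $\ell$. The Frobenius produces infinitely many such large solutions, so no uniform height bound is available; perfectness of $K$ does not help here. The paper flags exactly this obstruction at the start of Section~2.2.

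To repair the positive-characteristic case the paper replaces the height bound by Mason's structural result (Lemma~\ref{mason_lemma}): for a non-degenerate relation $u_1+\cdots+u_k=0$ one only gets that each quotient $u_2/u_1$ is of the form $\eta\prod_{j=1}^{m}\eta_j^{\,p^{t(j)}}$ with $m$ and the heights $H(\eta_j)$ bounded, but the exponents $t(j)$ uncontrolled. One then needs the additional combinatorial Lemma~\ref{num_th_lemma} to choose an exponent $n$ (with $v_P(r)=1$ at a carefully selected place $P$) so that $-n$ cannot arise as $\sum_j p^{t(j)}v_{P'}(\eta_j)$. Your proposal is missing both of these ingredients; without them the contradiction in case~(b) does not go through when $\mathrm{char}\,K>0$.
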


To show that both cases, $\omega$ and $\infty$, occur, we give a complete classification of the unit sum numbers of rings of integers of quadratic function fields over finite fields. The number field analogue of this result was found independently by Belcher \cite{Belcher1974} and Ashrafi and V{\'a}mos \cite{Ashrafi2005}. Results of this kind also exist for cubic and quartic number fields \cite{Filipin2008}, \cite{Tichy2007}, \cite{Ziegler2008}. In the global function field case it turns out that the only quadratic function fields whose rings of integers have unit sum number $\omega$ are real quadratic function fields that are again rational.

\begin{theorem}\label{quadratic_function_fields}
Let $K$ be a finite field, and $F$ a quadratic extension field of the rational function field $K(x)$ over $K$. Denote the integral closure of $K[x]$ in $F$ by $\O_F$. Then the following two statements are equivalent.
\begin{enumerate}[(a)]
\item $u(\O_F) = \omega$.
\item The function field $F | K$ has full constant field $K$ and genus $0$, and the infinite place of $K(x)$ splits into two places of $F | K$.
\end{enumerate}
\end{theorem}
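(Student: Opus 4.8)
The plan is to translate statement (a) into a structural statement about $\O_F$, and then to read off (b) from the arithmetic of the $S$-integers, where $S$ is the set of places of $F|K$ above the infinite place $\infty$ of $K(x)$. Here $\O_F=\O_S$ and $1\le|S|\le 2$ because $[F:K(x)]=2$, and since $u(\O_F)\ge\omega$ holds by Theorem~\ref{unit_sum_number_infinite} regardless, (a) is equivalent to: $\O_F$ is additively generated by its units. The first thing I would record is how rigid the unit group is when $|S|\le2$: the divisor of an $S$-unit is supported on $S$ and has degree $0$, so the constant units are exactly $\ol K^{\,*}$, where $\ol K$ is the full constant field of $F$; and if $|S|=2$, say $S=\{P_1,P_2\}$, then $f\mapsto v_{P_1}(f)$ embeds $\O_S^{\,*}/\ol K^{\,*}$ into $\Z$. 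Hence every sum of units of $\O_S$ lies in $\ol K$ when $|S|=1$, and in $\ol K[u,u^{-1}]$ for a fixed nonconstant unit $u$ when $|S|=2$ and such a $u$ exists; conversely each of these subrings is additively generated by units, since $c u^{j}$ with $c\ne 0$ is a unit. So the reformulation I am after is: \emph{$\O_F$ is additively generated by its units if and only if $|S|=2$ and $\O_F=\ol K[u,u^{-1}]$ for some nonconstant unit $u$.}

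For (a)$\Rightarrow$(b): if $\infty$ does not split then $|S|=1$, all sums of units lie in $\ol K\subsetneq\O_F$ (e.g. $x\notin\ol K$), which is absurd; so $\infty$ splits, $|S|=2$, $S=\{P_1,P_2\}$, and $\deg P_1=\deg P_2=1$, since two distinct places above $\infty$ in a degree-$2$ extension force $e(P_i|\infty)=f(P_i|\infty)=1$. By the reformulation there is a nonconstant unit $u$ with $\O_F=\ol K[u,u^{-1}]$; passing to fraction fields gives $F=\ol K(u)$, so $F|\ol K$ is rational, of genus $0$ with full constant field $\ol K$. Finally $\ol K=K$: if $[\ol K:K]=2$ then $\ol K(x)\subseteq F$ and $[\ol K(x):K(x)]=[\ol K:K]=2=[F:K(x)]$, so $F=\ol K(x)$ is the constant field extension of $K(x)$ by $\ol K$; but $\ol K$ embeds into the residue field of every place of $F$, so a place above $\infty$ has residue degree $\ge2$ and hence is unique, giving $|S|=1$ — contradiction. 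Thus $\ol K=K$, which is (b).

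For (b)$\Rightarrow$(a): under (b) the two places $P_1,P_2$ above $\infty$ have degree $1$, and since $F|K$ has genus $0$ with the rational place $P_1$, $F$ is a rational function field, so the degree-$0$ divisor $P_2-P_1$ is principal; write $P_2-P_1=(u)$ with $u\in F^{*}$. Then $u$ has a single simple pole, namely $P_1$, so $[F:K(u)]=\deg(u)_\infty=1$, i.e. $F=K(u)$, and $u$ is an $S$-unit. Identifying $F$ with $K(u)$ so that $P_1$ is the pole of $u$ and $P_2$ its zero, we get $\O_F=\O_S=K[u,u^{-1}]$, which is additively generated by its units (a Laurent polynomial is the sum of its monomials). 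With Theorem~\ref{unit_sum_number_infinite} this gives $u(\O_F)=\omega$.

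The step I expect to be the real obstacle is not any computation but the careful bookkeeping with the full constant field in (a)$\Rightarrow$(b): one must separate genuine quadratic extensions of $K(x)$ from constant field extensions and check that in a constant field extension $\infty$ stays prime — precisely what forces $\ol K=K$ above. (One should also note that in characteristic $2$ an inseparable quadratic extension has a unique place over $\infty$, so it never satisfies (b); this is already covered by the fact that two distinct places over $\infty$ force $e=f=1$, hence separability there.) The remaining ingredients — the identity $\O_F=\O_S$, the structure of the $S$-unit group for $|S|\le2$, and the classical fact that a genus-$0$ function field with a rational place is rational — are standard and I would simply cite them.
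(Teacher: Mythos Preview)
Your proof is correct and essentially recovers the short argument the paper sketches in its ``Added in proof'' paragraph. The paper's main proof in Section~\ref{proof_quadratic} takes a much longer route for (a)$\Rightarrow$(b): after disposing of the constant-field-extension case, it splits into odd and even characteristic, writes $F=K(x,y)$ explicitly (via $y^2=f(x)$ or an Artin--Schreier/Hasse normal form), proves structural Lemmata~\ref{generated_by_units}--\ref{degrees_of_units_char_2} giving exact degree formulas for the components $a_n,b_n$ of $(a(x)+y)^n$, and then argues that representing every element of $K[x]$ as a $K$-linear combination of the $a_n$ (resp.\ $b_nB$) forces $\deg f=2$ (resp.\ contradicts $g>0$). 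Your argument bypasses all of this by observing directly that if $\O_F$ is generated by its units then $\O_F=\ol K[u,u^{-1}]$ for a fundamental unit $u$, hence $F=\ol K(u)$ is rational; this is precisely what the author later noticed. For (b)$\Rightarrow$(a) the paper proves the more general Proposition~\ref{s_integers_rational_function_field} (any $n\ge 2$ degree-one places in a rational function field give $u(\O_S)=\omega$) via an explicit height-lowering induction, whereas you simply identify $\O_S$ with a Laurent polynomial ring; both are fine, the paper's version just yields a bit more.

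One small imprecision worth fixing: in your reformulation ``$\ol K[u,u^{-1}]$ for a fixed nonconstant unit $u$'' you need $u$ to generate the cyclic group $\O_S^{\,*}/\ol K^{\,*}$ (equivalently, to have minimal positive $v_{P_1}$-value among nonconstant units); otherwise not every unit lies in $\ol K[u,u^{-1}]$. This is implicit in what you wrote but should be stated.
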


Of course, one can use Theorem \ref{quadratic_function_fields} to obtain explicit criteria similar to those in \cite{Ashrafi2005}, \cite{Belcher1974}, \cite{Filipin2008}, \cite{Tichy2007}, \cite{Ziegler2008}. If $K$ is of odd characteristic then every quadratic extension field of the rational function field $K(x)$ with full constant field $K$ is of the form $F = K(x, y)$, where $y$ satisfies an equation $y^2 = f(x)$, for some separable polynomial $f \in K[X] \setminus K$. It is well known that $F$ is of genus $0$ if and only if $\deg f \in \{1, 2\}$, and that the infinite place of $K(x)$ splits in $F | K$ if and only if $\deg f$ is even and the leading coefficient of $f$ is a square in $K$. We therefore get the following corollary.

\begin{corollary}\label{cor_char_odd}
Let $K$ be a finite field of odd characteristic, and $F = K(x, y)$, where $K(x)$ is a rational function field over $K$ and $y^2 = f(x)$, for some separable polynomial $f \in K[x] \setminus K$. Denote the integral closure of $K[x]$ in $F$ by $\O_F$. Then the following two statements are equivalent.
\begin{enumerate}[(a)]
\item $u(\O_F) = \omega$.
\item The degree of $f$ is $2$ and the leading coefficient of $f$ is a square in $K$.
\end{enumerate}
\end{corollary}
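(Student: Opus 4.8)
The plan is to deduce Corollary~\ref{cor_char_odd} directly from Theorem~\ref{quadratic_function_fields}, translating condition~(b) of that theorem into the stated conditions on $f$ by means of the classical facts about quadratic function fields in odd characteristic recalled just above.

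First I would check that Theorem~\ref{quadratic_function_fields} is applicable in this situation. Since $f$ is separable and non-constant it is squarefree, hence not a square in $K(x)$, so $F=K(x,y)$ really is a quadratic extension of $K(x)$, and the ring $\O_F$ of the corollary is exactly the ring of integers occurring in Theorem~\ref{quadratic_function_fields}. It then remains to verify that $F\,|\,K$ has full constant field $K$. If $K'$ denotes the full constant field, then $K'(x)\subseteq F$ and $[K':K]=[K'(x):K(x)]$ divides $[F:K(x)]=2$; in the case $[K':K]=2$ one would have $F=K'(x)$, so $y^2=f$ would exhibit $f$ as a square in $K'[x]$, contradicting the fact that $f$ has a simple root. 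Hence $[K':K]=1$, as needed. (Alternatively, one can argue that any zero of $f$ gives a place of $K(x)$ ramifying in $F\,|\,K(x)$, which is impossible for a nontrivial constant field extension.)

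Next I would combine the two recalled facts. Genus $0$ of $F\,|\,K$ is equivalent to $\deg f\in\{1,2\}$, and the splitting of the infinite place of $K(x)$ in $F\,|\,K$ is equivalent to $\deg f$ being even with leading coefficient a square in $K$. Since the full constant field is automatically $K$ here, condition~(b) of Theorem~\ref{quadratic_function_fields} is therefore equivalent to the conjunction ``$\deg f\in\{1,2\}$, $\deg f$ even, and the leading coefficient of $f$ is a square in $K$'', which collapses to ``$\deg f=2$ and the leading coefficient of $f$ is a square in $K$''. Note that $\deg f=1$ is excluded because an odd-degree $f$ makes the infinite place ramify rather than split, and $\deg f\geq 3$ is excluded by the genus condition.

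Finally, applying the equivalence (a)$\,\Leftrightarrow\,$(b) of Theorem~\ref{quadratic_function_fields} yields that $u(\O_F)=\omega$ holds if and only if $\deg f=2$ and the leading coefficient of $f$ is a square in $K$, which is the assertion. There is no serious obstacle beyond this bookkeeping; the only step not obtained by a direct citation is the verification that $F\,|\,K$ has full constant field $K$, dealt with above.
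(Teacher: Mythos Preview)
Your proposal is correct and follows exactly the route the paper takes: deduce the corollary from Theorem~\ref{quadratic_function_fields} using the cited facts that $F|K$ has genus $0$ iff $\deg f\in\{1,2\}$ and that the infinite place splits iff $\deg f$ is even with square leading coefficient. You are actually slightly more thorough than the paper, since you explicitly verify that $K$ is the full constant field of $F|K$ under the hypotheses of the corollary, a point the paper leaves implicit.
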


If $K$ is of characteristic $2$ then every separable quadratic extension field of the rational function field $K(x)$ with full constant field $K$ can be written as $F = K(x, y)$, where $y$ satisfies a quadratic equation in Hasse normal form \cite[p. 38]{Hasse1935}. That is,
\begin{equation}\label{hasse_norm_form}y^2 + y = \frac{g(x)}{p_1(x)^{2 n_1 - 1} \cdots p_m(x)^{2 n_m - 1}}\text,\end{equation}
where, $p_1$, $\ldots$, $p_m \in K[X]$ are monic irreducible polynomials and distinct from each other, $n_1$, $\ldots$, $n_m$ are positive integers, $g \in K[X]$ is not divisible by any of the $p_i$, and the infinite place of $K(x)$ is either no pole or a pole of odd order of the right-hand side of \eqref{hasse_norm_form}. (That is, the difference of the degrees of denominator and numerator is non-negative or odd.) We put $B := p_1^{n_1}\cdots p_m^{n_m}$, and $C := g p_1 \cdots p_m$ (cf. \cite{LeBrigand2005}). Then \eqref{hasse_norm_form} becomes
\begin{equation}\label{mod_hasse_norm_form}y^2 + y = \frac{C(x)}{B(x)^2}\text.\end{equation}
Note that $K$ is the full constant field of $F | K$ if and only if $C$ is not constant. Using well-known properties of Artin-Schreier extensions of function fields (for example Proposition III.7.8. from \cite{Stichtenoth1993}), we see that the function field $F | K$ is of genus $0$ if and only if
\begin{equation}\label{a_s_degrees_case_1}\deg B = 0\text\quad\text{ and }\quad \deg C = 1\end{equation}
or
\begin{equation}\label{a_s_degrees_case_2}\deg B = 1\text\quad\text{ and }\quad \deg C \leq 2\text.\end{equation}
In case \eqref{a_s_degrees_case_1} the infinite place of $K(x)$ is ramified in $F | K$, and in case \eqref{a_s_degrees_case_2} the infinite place of $K(x)$ splits in $F | K$ if and only if either $\deg C < 2$, or $\deg C = 2$ and the leading coefficient of $C$ has the form $a^2 + a$, for some $a \in K$. (These are exactly the cases where the projection of $y^2 + y + C(x)/B(x)^2$ to the polynomial ring over the residue class field of the infinite place of $K(x)$ is reducible.) We have shown the following analogue of Corollary \ref{cor_char_odd}.

\begin{corollary}
Let $K$ be a finite field of characteristic $2$, and $F = K(x, y)$, where $y$ satisfies an irreducible quadratic equation \eqref{mod_hasse_norm_form} in Hasse normal form. Denote the integral closure of $K[x]$ in $F$ by $\O_F$. Then the following two statements are equivalent.
\begin{enumerate}[(a)]
\item $u(\O_F) = \omega$.
\item We have $\deg B = 1$ and either $\deg C \leq 1$, or $\deg C = 2$ and the leading coefficient of $C$ is of the form $a^2 + a$, for some $a \in K$. 
\end{enumerate}
\end{corollary}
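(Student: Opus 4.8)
The plan is to read off the corollary directly from Theorem~\ref{quadratic_function_fields} together with the facts about Artin--Schreier extensions already collected in the paragraph preceding the statement. By Theorem~\ref{quadratic_function_fields}, condition~(a) is equivalent to the conjunction of three conditions: that $F \mid K$ has full constant field $K$; that $F \mid K$ has genus $0$; and that the infinite place of $K(x)$ splits into two places of $F \mid K$. So it suffices to show that this conjunction is equivalent to~(b), and the work is entirely bookkeeping in the parametrization $B = p_1^{n_1}\cdots p_m^{n_m}$, $C = g\,p_1 \cdots p_m$.

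First I would recall the translations already noted: $K$ is the full constant field of $F \mid K$ precisely when $C$ is non-constant; and $F \mid K$ has genus $0$ precisely when either \eqref{a_s_degrees_case_1} holds (so $\deg B = 0$, $\deg C = 1$) or \eqref{a_s_degrees_case_2} holds (so $\deg B = 1$, $\deg C \le 2$). Next I would discard case~\eqref{a_s_degrees_case_1}: there the infinite place of $K(x)$ is ramified in $F \mid K$ and therefore does not split, so that case never satisfies~(a). Hence~(a) forces case~\eqref{a_s_degrees_case_2}, i.e.\ $\deg B = 1$. Conversely, $\deg B = \sum_i n_i \deg p_i = 1$ forces $m = 1$ with $p_1$ linear and $n_1 = 1$, so $p_1 \mid C$ and thus $\deg C \ge 1$; in particular $C$ is automatically non-constant, the full-constant-field clause is then free, and ``$\deg C \le 1$'' coincides with ``$\deg C = 1$''. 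It remains to match the splitting condition: under~\eqref{a_s_degrees_case_2} the infinite place splits exactly when $\deg C < 2$, or $\deg C = 2$ and the leading coefficient of $C$ is of the form $a^2 + a$ with $a \in K$ --- which, given $\deg B = 1$ and $1 \le \deg C \le 2$, is exactly condition~(b). Assembling these equivalences yields (a)~$\iff$~(b).

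I do not expect a genuine obstacle here; the only point requiring a little care is the interaction of the ``full constant field'' clause with the degree conditions --- namely, observing that $\deg B = 1$ already forces $\deg C \ge 1$, so that clause is subsumed and need not appear explicitly in~(b) --- together with the observation that case~\eqref{a_s_degrees_case_1}, although it does give full constant field and genus~$0$, is ruled out by the splitting requirement and hence contributes nothing.
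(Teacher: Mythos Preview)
Your proposal is correct and follows essentially the same route as the paper: the corollary is derived in the text immediately preceding it by combining Theorem~\ref{quadratic_function_fields} with the Artin--Schreier facts about genus and splitting, ruling out case~\eqref{a_s_degrees_case_1} because the infinite place ramifies there, and matching the splitting criterion in case~\eqref{a_s_degrees_case_2}. Your additional remark that $\deg B = 1$ forces $\deg C \ge 1$ (so the full-constant-field clause is automatic) is a helpful clarification that the paper leaves implicit.
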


Note that if $K$ is of characteristic $2$ and $F|K(x)$ is an inseparable quadratic extension then it is purely inseparable, and the infinite place of $K(x)$ is ramified in $F | K$. Therefore, we have $u(\O_F) = \infty$ in this case.

In the number field case, quantitative problems in relation with sums of units have been objects of recent study. The question, how many non-associated algebraic integers with bounded norm in a number field can be written as a sum of exactly $k$ units, has been investigated in \cite{Filipin2008}, \cite{Filipin2008B} and \cite{Fuchs2009}. Similar considerations in the function field case would be of interest.
\section{Proof of Theorem \ref{unit_sum_number_infinite}}
Let $\tilde{K}$ be the full constant field of $F | K$. Since the places of the function fields $F | K$ and $F | \tilde{K}$ are the same, we may assume without loss of generality that $K = \tilde{K}$. We start with the case where $K$ is of characteristic $0$.
\subsection{Characteristic $0$}
The main tool for our proof is a finiteness result on $S$-unit equations by Mason \cite[Lemma 2]{Mason1986char0}.
\begin{lemma}\label{mason_lemma_char_0}
Let $K$ be an algebraically closed field of characteristic $0$, $F | K$ an algebraic function field, and $S$ a finite set of places of $F | K$. Suppose that $u_1$, $\ldots$, $u_k$ are $S$-units in $F$ such that
$u_1 + \cdots + u_k = 1\text,$
and no proper subset of $\{1, u_1, \ldots, u_k\}$ is $K$-linearly dependent. Then we have $H(u_i) \leq A(k)$, for all $1 \leq i \leq k$ and a constant $A(k)$ that depends only on $k$, $S$, and $F | K$.
\end{lemma}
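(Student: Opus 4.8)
The plan is to run the classical Wronskian argument for $S$-unit equations over function fields. For orientation, note that the case $k=2$ is exactly the function field $abc$-inequality of Mason: if $u_1+u_2=1$ with $u_1,u_2$ nonconstant $S$-units, then $u_1$ defines a morphism $F\to\mathbb{P}^1$ of degree $H(u_1)$ all of whose fibres over $0,1,\infty$ are supported in $S$ (these fibres consist of the zeros of $u_1$, the zeros of $u_2$, and the poles of $u_1$), and since $\operatorname{char}K=0$ the ramification is tame, so Riemann--Hurwitz gives $H(u_1)\le 2g-2+3|S|$, where $g$ is the genus of $F|K$. For general $k$ I would replace this geometric input by a determinant of derivatives. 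Fix a separating element $t\in F$, put $D=d/dt$ (so $\ker(D)=K$ because $\operatorname{char}K=0$), let $T$ be the finite support of the differential $dt$, and for $f_1,\ldots,f_m\in F$ write $W(f_1,\ldots,f_m)=\det\!\bigl(D^{i-1}f_j\bigr)_{1\le i,j\le m}$; recall that in characteristic $0$ one has $W(f_1,\ldots,f_m)\neq 0$ if and only if $f_1,\ldots,f_m$ are $K$-linearly independent.

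Since $\{1,u_1,\ldots,u_{k-1}\}$ is a proper subset of $\{1,u_1,\ldots,u_k\}$, it is $K$-linearly independent, so $W(1,u_1,\ldots,u_{k-1})\neq 0$. Expanding this Wronskian along its constant column and then doing column operations with the relation $u_1+\cdots+u_k=1$ yields the identity $W(1,u_1,\ldots,u_{k-1})=\pm\,W(Du_i:i\neq j)$ for every $j\in\{1,\ldots,k\}$. Pulling $u_i$ out of the $i$-th column, set
\[
\Delta_j:=\frac{W(Du_i:i\neq j)}{\prod_{i\neq j}u_i}=\det\!\left(\frac{D^{a}u_i}{u_i}\right)_{1\le a\le k-1,\ i\neq j},
\]
whose entries are universal polynomials in the logarithmic derivative $Du_i/u_i$ and its iterated $D$-derivatives. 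Then each $\Delta_j$ is nonzero, and comparing the identity for indices $j$ and $j'$ gives
\[
\frac{u_j}{u_{j'}}=\pm\,\frac{\Delta_j}{\Delta_{j'}}\qquad(1\le j,j'\le k).
\]

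The crucial step is to bound the divisor of $\Delta_j$ independently of the $u_i$. At a place $P\notin T$ a uniformizer $\pi$ satisfies $v_P(D\pi)=0$, so $D$ maps the valuation ring at $P$ into itself; hence, since the $u_i$ are $S$-units, for $P\notin S\cup T$ every matrix entry of $\Delta_j$ is $P$-integral, so $\Delta_j$ has no pole outside $S\cup T$. At $P\in S\cup T$ the logarithmic derivative $Du_i/u_i$ has pole order bounded purely in terms of $v_P(dt)$ --- in particular independently of $v_P(u_i)$ --- and each further application of $D$ worsens pole orders by a bounded amount, so expanding the determinant bounds the pole order of $\Delta_j$ at $P$ by a constant $C_P$ depending only on $k$ and $v_P(dt)$. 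Therefore $\deg(\Delta_j)_\infty\le C:=\sum_{P\in S\cup T}C_P$; since the zero divisor and the pole divisor of a function have equal degree, also $\deg(\Delta_j)_0\le C$, whence $H(u_j/u_{j'})=H(\Delta_j/\Delta_{j'})\le\deg(\Delta_j)_\infty+\deg(\Delta_{j'})_0\le 2C$. Finally, dividing $u_1+\cdots+u_k=1$ by $u_j$ gives $1/u_j=1+\sum_{i\neq j}u_i/u_j$, so subadditivity of the height yields $H(u_j)=H(1/u_j)\le (k-1)\cdot 2C=:A(k)$; and $C$ --- hence $A(k)$ --- depends only on $k$, $S$, and $F|K$.

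The main obstacle I anticipate is precisely this uniform pole-order estimate for $\Delta_j$ along $S\cup T$: one has to track carefully how $d/dt$ interacts with the valuations $v_P$ --- separating the places where $dt$ is a unit from its support, and exploiting that passing from $u_i$ to $Du_i/u_i$ removes the dependence on $v_P(u_i)$ --- since this is exactly what converts the formal Wronskian identity into an \emph{a priori} bound on the heights. The remaining ingredients --- the Wronskian identity, the descent from the ratios $u_j/u_{j'}$ back to the $u_j$, and the verifications that characteristic $0$ supplies a separating variable, the Wronskian nonvanishing criterion, and nonvanishing of the integer multiplicities occurring in the differentiations --- are routine.
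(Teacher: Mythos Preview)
The paper does not prove this lemma at all; it is quoted directly from Mason (his Lemma~2), with the remark that Brownawell and Masser subsequently sharpened the explicit constant. So there is no ``paper's own proof'' to compare against.

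Your argument is precisely the classical Wronskian method that underlies Mason's original proof and its Brownawell--Masser refinement: form the logarithmic Wronskian determinants $\Delta_j$, use the differentiated relation $\sum_i D^a u_i=0$ to identify the minors $W(Du_i:i\neq j)$ up to sign and hence write each ratio $u_j/u_{j'}$ as $\pm\Delta_j/\Delta_{j'}$, bound the pole divisor of every $\Delta_j$ uniformly via the key observation that $v_P(du_i/u_i)\ge -1$ regardless of $v_P(u_i)$, and finally recover $H(u_j)$ from the bounds on $H(u_j/u_{j'})$ by dividing the unit equation through by $u_j$. The outline is correct, and the step you single out as the main obstacle --- controlling how $D=d/dt$ interacts with the valuations along $S\cup T$ --- is indeed the only place where real care is needed; the estimate $v_P(D^a u_i/u_i)\ge -a\bigl(1+\max(0,-v_P(dt))\bigr)-\max(0,v_P(dt))$ (or any crude bound of this shape) is enough to close it. Nothing is missing.
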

 
Mason even provides an explicit formula for the bound $A(k)$, which was improved by Brownawell and Masser \cite{Brownawell1986}. For our purpose, however, the above lemma is sufficient. Suppose that every element of $\O_S$ is a sum of at most $N$ $S$-units, for some integer $N > 1$. Choose some non-constant $S$-integer $r$ that is not an $S$-unit, and denote the set of zeros of $r$ by $T$. Obviously, $r$ is an $(S \cup T)$-unit, and there is some place $P \in T \smallsetminus S$.

For every positive integer $n$, there exists some $2 \leq k \leq N$, and $S$-units $\varepsilon_1$, $\ldots$, $\varepsilon_k \in \O_S^*$, such that 
$$\varepsilon_1 + \cdots + \varepsilon_k = r^n\text,$$
and no proper subset of $\{\varepsilon_1, \ldots, \varepsilon_k, r^n\}$ is $K$-linearly dependent. Therefore, we have
\begin{equation}\label{unit_equation}\varepsilon_1 / r^n + \cdots + \varepsilon_k / r^n = 1\text,\end{equation}
for $(S \cup T)$-units $\varepsilon_1 / r^n$, $\ldots$, $\varepsilon_k / r^n$, and still no proper subset is $K$-linearly dependent. 

For some algebraically closed field $\Phi \supset F$, let $\ol{K}$ be the algebraic closure of $K$ in $\Phi$. We put $F' := F \ol{K}$ and regard the constant field extension $F' | \ol{K}$ of $F | K$. Let $S'$ be the set of all places of $F' | \ol{K}$ lying over places in $(S \cup T)$.

Then \eqref{unit_equation} is an $S'$-unit equation in $F' | \ol{K}$. Since $F$ and $\ol{K}$ are linearly disjoint over $K$ (see, for example, \cite[Proposition III.6.1]{Stichtenoth1993}), all requirements of Lemma \ref{mason_lemma_char_0} are satisfied. Therefore,
\begin{equation}\label{height_bounded}H(\varepsilon_1 / r^n) \leq A(k) \leq A := \max\{A(2), \ldots, A(N)\}\text.\end{equation}

On the other hand, we have $H(\varepsilon_1 / r^n) \geq | v_{P'}(\varepsilon_1 / r^n) | = n v_{P'}(r) \geq n$, for any place $P'$ of $F' | \ol{K}$ lying over $P$. Here we used that $\varepsilon_1$ is an $S$-unit and $P \notin S$, whence $v_P(\varepsilon_1)=0$. If $n$ is chosen big enough, this contradicts \eqref{height_bounded}.

\subsection{Positive characteristic}
The case of positive characteristic $p$ is similar in spirit, but a little bit more technical. The main problem is that, due to the Frobenius homomorphism, the height of solutions of unit equations is no longer bounded. For if
$$u_1 + \cdots + u_k = 1$$
is a solution of such a unit equation then 
$$u_1^{p^l} + \cdots + u_k^{p^l} = 1$$
as well, for any positive integer $l$. Again, we use a result by Mason. The following lemma is a special form of Lemma 1 from \cite{Mason1986}. 

\begin{lemma}\label{mason_lemma}
Let $K$ be an algebraically closed field of positive characteristic $p$, and $K(z) | K$ a rational function field. Let $F$ be a finite separable extension of $K(z)$, and denote by $\O_F$ the integral closure of $K[z]$ in $F$. 

For each positive integer $k$, there exist bounds $M(k)$, $A(k) \in \R$, depending only on $F | K(z)$ and $k$, such that the following holds: Let $(u_1, \ldots, u_k) \in (\O_F^*)^k$  be a solution of the unit equation
$$u_1 + \cdots + u_k = 0,$$
such that no proper subsum on the left-hand side vanishes. Then there are non-negative integers $m$, $t(1)$, $\ldots$, $t(m)$, a non-zero constant $\eta \in K$, and units $\eta_1$, $\ldots$, $\eta_m \in \O_F^*$, such that
$$u_2 / u_1 = \eta \prod_{j=1}^m \eta_j^{p^{t(j)}}\text.$$
Moreover, we have $m \leq M(k)$, and $H(\eta_j) \leq A(k)$, for all $1 \leq j \leq m$. (As usual, the empty product is interpreted as $1$.)
\end{lemma}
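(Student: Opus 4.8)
The plan is to obtain Lemma~\ref{mason_lemma} by specializing Mason's general structure theorem for $S$-unit equations in positive characteristic (Lemma~1 of \cite{Mason1986}) to the situation at hand and translating it into the present notation; the work is essentially one of setting up the right dictionary and checking the quantifier structure.

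First I would fix the dictionary. Let $S$ be the finite set of places of $F \mid K$ lying above the infinite place (the pole of $z$) of the rational subfield $K(z)$. Because $\O_F$ is the integral closure of $K[z]$ in $F$, an element of $F^*$ is a unit of $\O_F$ exactly when its divisor is supported on $S$; thus the units of $\O_F$ are precisely the $S$-units of $F$. The hypothesis that no proper subsum of $u_1 + \cdots + u_k$ vanishes is Mason's non-degeneracy condition, and $H$ is the usual height (degree of the zero, equivalently the pole, divisor) with respect to which Mason's bounds are formulated once $K$ is fixed. Hence a solution $(u_1, \dots, u_k) \in (\O_F^*)^k$ of $u_1 + \cdots + u_k = 0$ with no vanishing proper subsum is exactly an instance of the equation Mason treats, and $S$ — and therefore every constant his lemma produces — is determined by $F \mid K(z)$ alone, which gives the asserted dependence of $M(k)$ and $A(k)$.

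Second, I would read off Mason's conclusion and put it in the stated shape. Mason describes $u_2/u_1$ as a nonzero constant from $K$ times a product of $p$-power powers of $S$-units of bounded height, with an explicit bound on the number of factors; regrouping factors that share an exponent, absorbing constants into a single $\eta \in K^*$, and letting $\eta_1, \dots, \eta_m$ be the remaining bounded-height units, one arrives at $u_2/u_1 = \eta \prod_{j=1}^m \eta_j^{p^{t(j)}}$ with $m \le M(k)$ and $H(\eta_j) \le A(k)$. It is worth checking the small cases directly as an anchor: for $k = 2$ the relation $u_1 + u_2 = 0$ forces $u_2/u_1 = -1 \in K^*$, so $m = 0$ works.

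The main obstacle I anticipate is not depth but a careful reconciliation of normalizations: Mason's lemma carries its own bookkeeping — a distinguished ratio, explicit constants, and a particular way of separating the ``Frobenius part'' from the bounded part (for instance as $\eta\, w^{p^t}$ with a single exponent, or with side conditions on the exponents) — and one must unwind this to recover exactly the form and the existential statement claimed here while keeping the final bounds dependent only on $F \mid K(z)$ and $k$. If instead one wanted a self-contained argument, the mechanism is the classical one: apply a nontrivial $K(z)$-derivation $D$ to the relation to get $\sum_i u_i\,(Du_i/u_i) = 0$, where the logarithmic derivatives $Du_i/u_i$ have height bounded in terms of $S$; a Wronskian-type linear-algebra argument then shows that each ratio $u_i/u_1$ either has bounded height or has vanishing derivative, hence is a $p$-th power, and one recurses on the descended equation — but executing this with the precise constants is exactly Mason's contribution.
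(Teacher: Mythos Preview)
Your proposal is correct and aligns with the paper's own treatment: the paper does not prove this lemma at all but simply records it as ``a special form of Lemma~1 from \cite{Mason1986}'', and your plan is precisely to specialize that result via the dictionary $\O_F^* = $ $S$-units for $S$ the places above the pole of $z$. Your added remarks (the $k=2$ sanity check and the derivation sketch) go beyond what the paper does, but the core approach is the same.
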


Additionally, we use the following elementary number-theoretical lemma.

\begin{lemma}\label{num_th_lemma}
Let $p$, $M$, $A$ be positive integers. Then there exist infinitely many positive integers $n$ that can not be written in the form
\begin{equation}\label{n_sum_p_powers}n = -\sum_{j=1}^{m} p^{t(j)} k_j\text,\end{equation}
with any integer $0 \leq m \leq M$, integers $k_j$ with $|k_j| \leq A$, and non-negative integers $t(j)$. 
\end{lemma}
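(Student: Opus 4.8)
The plan is to show that the set $\mathcal{R}\subseteq\Z$ of integers of the prescribed shape is so thin that it cannot omit only finitely many positive integers. Since the condition $|k_j|\le A$ is symmetric, replacing each $k_j$ by $-k_j$ shows that the sign in \eqref{n_sum_p_powers} is immaterial: $\mathcal{R}$ is exactly the set of $n=\sum_{j=1}^{m}p^{t(j)}k_j$ with $0\le m\le M$, $|k_j|\le A$, $t(j)\ge 0$, and $\mathcal{R}=-\mathcal{R}$. It therefore suffices to prove that $\#(\mathcal{R}\cap[1,N])=o(N)$ as $N\to\infty$. Indeed, this gives $\#(\mathcal{R}\cap(N_0,N])<N-N_0$ for every fixed $N_0$ and all large $N$, so $(N_0,N]$ contains a positive integer not in $\mathcal{R}$; as $N_0$ is arbitrary, there are infinitely many such integers.

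Call a representation of an integer all of whose exponents $t(j)$ are at most $T$ a \emph{$T$-representation}. For a fixed $T$, the data $(m,t(1),\dots,t(m),k_1,\dots,k_m)$ can be chosen in at most $(M+1)\big((T+1)(2A+1)\big)^{M}$ ways, so at most that many integers admit a $T$-representation. Hence everything reduces to the following claim: \emph{if $n\ne 0$ is representable, then $n$ admits a $T$-representation for some $T\le\log_p|n|+C$, with $C=C(p,M,A)$ depending only on $p$, $M$, $A$.} Granting it, every representable $n$ with $1\le n\le N$ admits a $T$-representation for some $T\le\log_p N+C$, so $\#(\mathcal{R}\cap[1,N])\le(M+1)\big((\log_p N+C+1)(2A+1)\big)^{M}=O\big((\log N)^{M}\big)=o(N)$, as required.

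To prove the claim, I would fix a representation $n=\sum_{j=1}^{m}p^{t(j)}k_j$ with the fewest possible summands. Minimality forces that no non-empty subcollection of the terms $p^{t(j)}k_j$ sums to $0$ (else one could delete it and still represent $n\ne 0$ with fewer summands). Collecting equal exponents, write $n=\sum_{i=1}^{r}p^{s_i}\ell_i$ with $s_1>s_2>\dots>s_r\ge 0$, $r\le M$ and $0<|\ell_i|\le MA$; the ``no vanishing subcollection'' property survives, so each partial sum $P_j:=\sum_{i=1}^{j}p^{s_i}\ell_i$ is nonzero, and an immediate induction shows $p^{s_j}\mid P_j$, hence $|P_j|\ge p^{s_j}$. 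Put $\delta:=1+\log_p(2MA)$ and $C:=M\delta$, and suppose, for contradiction, that $s_1>\log_p|n|+C$. One checks by induction that $s_j\ge s_1-(j-1)\delta$ for all $j$: this is trivial for $j=1$, and if it holds for $j-1$ then $s_{j-1}>\log_p|n|+\delta$, so $p^{s_{j-1}}\ge 2|n|$, whence the tail $Q_j:=n-P_{j-1}=\sum_{i\ge j}p^{s_i}\ell_i$ satisfies $|Q_j|\ge|P_{j-1}|-|n|\ge\tfrac12 p^{s_{j-1}}$ and also $|Q_j|\le MA\sum_{i\ge j}p^{s_i}\le 2MA\,p^{s_j}$; comparing gives $p^{s_{j-1}-s_j}\le 4MA\le p^{\delta}$, i.e.\ $s_j\ge s_{j-1}-\delta\ge s_1-(j-1)\delta$. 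Taking $j=r$ and using $P_r=n$, $p^{s_r}\mid n$ and $n\ne 0$ yields $\log_p|n|\ge s_r\ge s_1-(M-1)\delta$, contradicting $s_1>\log_p|n|+M\delta$. This establishes the claim with $C=M\big(1+\log_p(2MA)\big)$.

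I expect the claim --- bounding the exponents in a most economical representation --- to be the only real obstacle. The delicate point is that cancellation among the summands $p^{t(j)}k_j$ can produce a small $|n|$ from terms with arbitrarily large exponents (as in $1=2^{T}-2^{T-1}-\dots-2^{0}$ for $p=2$). The induction above circumvents this by exploiting that such telescoping is expensive in summands: once the top exponent exceeds $\log_p|n|$ by more than a bounded amount, consecutive exponents can decrease only in steps of size at most $\delta$, and there are at most $M$ of them, so the smallest exponent would still exceed $\log_p|n|$ --- impossible, since $p^{s_r}\mid n$.
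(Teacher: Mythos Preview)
Your argument is correct, but it takes a genuinely different route from the paper. The paper's proof is a one-paragraph residue count: working modulo $p^{T}$, every exponent $t(j)\ge T$ contributes $0$, so each representable residue class has a representative $-\sum_{j=1}^{M}p^{s(j)}k_j$ with $s(j)\in\{0,\dots,T-1\}$, giving at most $(T(2A+1))^{M}$ classes; since this is polynomial in $T$ while there are $p^{T}$ classes, some class is missed for large $T$, and every integer in that class is unrepresentable. In other words, reduction mod $p^{T}$ \emph{automatically} truncates the exponents, which is exactly the step you work hardest to achieve. Your approach instead bounds the exponents in a minimal representation directly, via the no-vanishing-subsum trick and the inductive gap estimate $s_{j-1}-s_j\le\delta$, and then counts in $[1,N]$. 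This buys you a sharper quantitative statement, $\#(\mathcal{R}\cap[1,N])=O((\log N)^{M})$, at the cost of a noticeably longer proof. (Both arguments tacitly assume $p\ge 2$; for $p=1$ the set $\mathcal{R}$ is finite and the lemma is trivial.)
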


\begin{proof}
Let $T$ be any positive integer and $R_T$ the set of residue classes of all positive integers of the form \eqref{n_sum_p_powers} modulo $p^T$. Each residue class in $R_T$ has a representative of the form
$$-\sum_{j=1}^M p^{s(j)} k_j\text,$$
with $k_j$ as in the lemma, and integers $s(j) \in \{0, \ldots, T-1\}$. Obviously, there are at most $(T (2A + 1))^M$ such representatives, which is a polynomial in $T$. On the other hand, there are $p^T$ residue classes modulo $p^T$. If $T$ is chosen big enough then not all residue classes modulo $p^T$ are in $R_T$, and the lemma follows immediately. \qed
\end{proof}

Let $N$ be a positive integer. We construct an element of $\O_S$ that can not be written as a sum of at most $N$ units of $\O_S$. 

Choose some place $P$ of $F|K$ that is not in $S$. The strong approximation theorem permits us to find an $S$-integer $r \in \O_S$ with $v_P(r) = 1$. Let $T$ be the set of zeros of $r$. Then $r$ is obviously an $(S \cup T)$-unit, but no $S$-unit. 

For some algebraically closed field $\Phi \supset F$, let $\ol{K}$ be the algebraic closure of $K$ in $\Phi$, and put $F' := F \ol{K}$. We regard the constant field extension $F' | \ol{K}$ of $F | K$.

Let $S'$ be the set of all places of $F'|\ol{K}$ lying over places in $(S \cup T)$, and choose some places $Q \in S'$ and $R$, $R' \notin S'$ of $F'|\ol{K}$. By the strong approximation theorem, we can find an element $z \in F'$ that satisfies the conditions
\begin{align*}v_R(z) &= 1\text,\\v_{R'}(z) &= |S'|-1\text,\\v_W(z) &= -1\text{, for all } W\in S'\smallsetminus\{Q\}\text{, and}\\v_W(z) &\geq 0\text{, for all places } W\notin S'\cup \{R, R'\}\text{.}
\end{align*}

Since the principal divisor of $z$ must have degree $0$, it follows that $v_Q(z) < 0$. Therefore, the poles of $z$ are exactly the elements of $S'$. Moreover, $z$ is not a $p$-th power, since $p$ does not divide $v_R(z) = 1$. It follows that $F'$ is separable over $\ol{K}(z)$ (use, for example, Proposition III.9.2 (d) from \cite{Stichtenoth1993}) and the integral closure of $\ol{K}[z]$ in $F'$ is exactly $\O_{S'}$, the ring of $S'$-integers of $F'$. 

For any positive integer $k$, let $M(k)$, $A(k)$ be the constants from Lemma \ref{mason_lemma}, for the function field extension $F' | \ol{K}(z)$. In Lemma \ref{num_th_lemma}, put 
$$M := \max\{M(k) \mid 2 \leq k \leq N + 1\}\text,\quad\text{ and }\quad A := \max\{A(k) \mid 2 \leq k \leq N + 1\}\text,$$
and choose some positive integer $n$ that can not be written in the form \eqref{n_sum_p_powers}.

We claim that the element $r^n \in \O_S$ can not be written as a sum of at most $N$ units of $\O_S$. Suppose otherwise; then there is some $2 \leq k \leq N$ and units $\varepsilon_1$, $\ldots$, $\varepsilon_k \in \O_S^*$, such that
$$\varepsilon_1 + \cdots + \varepsilon_k = r^n\text,$$
and no proper subsum on the left-hand side vanishes. Therefore, we get
$$-r^n + \varepsilon_1 + \cdots + \varepsilon_k = 0\text,$$
for $(S \cup T)$-units $-r^n$, $\varepsilon_1$, $\ldots$, $\varepsilon_k$, and still no proper subsum vanishes. Regarded as elements of $F'$, the summands on the left-hand side are $S'$-units. Lemma \ref{mason_lemma} implies that there exist an integer $0 \leq m \leq M$, non-negative integers $t(1)$, $\ldots$, $t(m)$, a constant $\eta \in \ol{K}^*$, and $S'$-units $\eta_1$, $\ldots$, $\eta_m \in \O_{S'}^*$, such that $H(\eta_j) \leq A$, for all $1 \leq j \leq m$, and 
\begin{equation}\label{asdf_1}\varepsilon_1/r^n = -\eta \prod_{j=1}^m \eta_j^{p^{t(j)}}\text.\end{equation}

Let $P' \in S'$ be a place of $F'| \ol{K}$ lying over $P$. Since $K$ is perfect, constant field extensions are unramified, and thus $v_{P'}(r) = 1$. We consider \eqref{asdf_1} in the $P'$-adic valuation:
$$-n = v_{P'}(\varepsilon_1 / r^n) = \sum_{j=1}^m p^{t(j)} v_{P'}(\eta_j)\text.$$
Since $|v_{P'}(\eta_j)|$ are bounded by $H(\eta_j) \leq A$, we found a representation \eqref{n_sum_p_powers}, contrary to our choice of $n$. This completes the proof of Theorem \ref{unit_sum_number_infinite}.

\section{Proof of Theorem \ref{quadratic_function_fields}}\label{proof_quadratic}
\subsection{\emph{(b)} implies \emph{(a)}}
To show that \emph{(b)} implies \emph{(a)}, we prove a more general proposition.

\begin{proposition}\label{s_integers_rational_function_field}
Let $K(x)$ be a rational function field over any perfect field $K$, $n \geq 2$ an integer, and $S = \{P_1, \ldots, P_n\}$ a set of $n$ distinct places of $K(x)$ of degree one. Denote by $\O_S$ the ring of $S$-integers of $K(x)$. Then $u(\O_S) = \omega$.
\end{proposition}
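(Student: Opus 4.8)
The plan is to observe that $u(\O_S) \geq \omega$ already follows from Theorem~\ref{unit_sum_number_infinite}, applied with the perfect field $K$, the function field $F = K(x)$, and the nonempty finite set $S$. Hence it suffices to prove the reverse inequality $u(\O_S) \leq \omega$, that is, that $\O_S$ is additively generated by its units.

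The first step is a normalisation of $S$. The places of $K(x) | K$ of degree one are the infinite place $P_\infty$ (the pole of $x$) and the places $P_{x - a}$ with $a \in K$; the automorphism group $\mathrm{PGL}_2(K)$ of $K(x) | K$ acts transitively on them, and a $K$-automorphism $\sigma$ of $K(x)$ induces an isomorphism $\O_S \cong \O_{\sigma(S)}$ carrying units to units, hence preserving the unit sum number. Choosing $\sigma$ with $\sigma(P_1) = P_\infty$, we may therefore assume
\[
S = \{ P_\infty, P_{x - a_2}, \dots, P_{x - a_n} \}
\]
with $a_2, \dots, a_n \in K$ pairwise distinct. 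Here the hypothesis $n \geq 2$ enters in an essential way: it guarantees that $S$ still contains a finite place $P_{x - a_2}$, so that $x - a_2$ is a \emph{non-constant} $S$-unit. With this normalisation one checks directly that $\O_S = K\bigl[ x, (x - a_2)^{-1}, \dots, (x - a_n)^{-1} \bigr]$ and that the $S$-units are exactly the elements $c \prod_{i = 2}^{n} (x - a_i)^{m_i}$ with $c \in K^*$ and $m_i \in \Z$ (an element of $K(x)^*$ is an $S$-unit if and only if it has neither zero nor pole outside $S$).

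The second step is a partial fraction decomposition over $K$. Any $f \in \O_S$ can be written as
\[
f = p(x) + \sum_{i = 2}^{n} \sum_{j \geq 1} \frac{c_{ij}}{(x - a_i)^j},
\]
with $p \in K[x]$ and only finitely many coefficients $c_{ij} \in K$ nonzero. Each term $c_{ij} (x - a_i)^{-j}$ is zero or an $S$-unit, since its only zero and pole lie at $P_\infty$ and $P_{x - a_i}$, both in $S$. For the polynomial part I would expand $p$ in powers of $x - a_2$, writing $p(x) = \sum_{k \geq 0} d_k (x - a_2)^k$ with $d_k \in K$; again each term $d_k (x - a_2)^k$ is zero or an $S$-unit. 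Adding up these finitely many expressions exhibits $f$ as a finite sum of units of $\O_S$. Therefore $u(\O_S) \leq \omega$, and together with Theorem~\ref{unit_sum_number_infinite} this gives $u(\O_S) = \omega$.

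I do not expect a genuine obstacle; the proposition is essentially routine once $S$ is in normal form. The two points that need care are the reduction step — one should verify that an element of $\mathrm{PGL}_2(K)$ really intertwines the two rings of $S$-integers together with their unit groups — and the decision to expand the polynomial part of $f$ in powers of $x - a_2$ rather than $x$, which is precisely where $n \geq 2$ is indispensable (for $n = 1$ the same reduction gives $\O_S \cong K[x]$, whose units form only $K^*$, so that $u(\O_S) = \infty$).
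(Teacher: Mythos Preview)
Your proof is correct and takes a genuinely different route from the paper's.

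Both arguments begin identically: invoke Theorem~\ref{unit_sum_number_infinite} for the lower bound, and reduce to the case where $P_1$ is the infinite place and the remaining $P_i$ are the zeros of $x - a_i$. From there the paper proceeds by an inductive height-reduction: given $f$ with $H(f) > 0$, it produces a single explicit $S$-unit $u = \lambda(x - a_2)^{v_1}$ such that $H(f - u) < H(f)$, and iterates. You instead write down the answer in one stroke via the partial fraction decomposition $f = p(x) + \sum_{i,j} c_{ij}(x - a_i)^{-j}$ and then expand $p$ in powers of $x - a_2$; every nonzero term in the resulting finite sum is visibly an $S$-unit because all of its zeros and poles lie in $S$.

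Your approach is more explicit and arguably cleaner: it yields a concrete representation of each $f$ as a sum of units in one pass, and it makes the role of the hypothesis $n \geq 2$ transparent (it is exactly what lets you expand the polynomial part in powers of a non-constant $S$-unit). The paper's height-descent argument is slightly more flexible in that it does not rely on the specific algebraic structure of $K(x)$ (partial fractions), only on the existence of an $S$-unit killing the leading term; this style of argument tends to generalise more readily. Either way, both proofs are short and elementary, and yours is a perfectly good substitute.
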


\begin{proof}
By Theorem \ref{unit_sum_number_infinite}, we have $u(\O_S) \geq \omega$, hence it is enough to show that every element of $\O_S$ is a sum of $S$-units. This is clear for $0 \in \O_S$. Let $f \in \O_S \setminus \{0\}$ be a non-zero element. The pole divisor of $f$ has the form
$$(f)_\infty = v_1 P_1 + \cdots + v_n P_n\text,$$
with non-negative integers $v_1$, $\ldots$, $v_n$. If $H(f) = 0$ then $f$ is a constant and nothing is left to prove. Assume that $H(f) > 0$, that is, at least one of the $v_i$ is positive. We construct an $S$-unit $u \in \O_S^*$, such that either $f = u$ or $H(f - u) < H(f)$. Then the proposition follows by induction.

Without loss of generality, we assume that $v_1 > 0$. By exchanging the generating element $x$, if necessary, we can always assure that $P_1$ is the infinite place of $K(x)$. Let $x - \alpha_i \in K[x]$ be the monic local parameter for $P_i$, for each $2 \leq i \leq n$. Then $f$ is of the form
$$f = g(x)\cdot(x - \alpha_2)^{- v_2} \cdots (x - \alpha_n)^{- v_n}\text,$$
where $g \in K[X] \setminus \{0\}$ is some polynomial. Since $-v_1 = v_{P_1}(f)$ is the difference of the degrees of denominator and numerator of $f$, we have $-v_1 = v_2 + \cdots + v_n - \deg g$. Therefore, $g$ is of degree $v_1 + \cdots + v_n$. Let $\lambda$ be the leading coefficient of $g$, and put $u := \lambda (x - \alpha_2)^{v_1}$. Then $u$ is an $S$-unit, and we get
$$f - u = \frac{g - \lambda (x - \alpha_2)^{v_1 + v_2}(x - \alpha_3)^{v_3} \cdots (x - \alpha_n)^{v_n}}{(x - \alpha_2)^{v_2} \cdots (x - \alpha_n)^{v_n}}\text.$$
The degree of the numerator is smaller than the degree of $g$. Therefore, we have $v_{P_1}(f - u) > - v_1$. Also, $v_{P_i}(f - u) \geq - v_i$, for $2 \leq i \leq n$, and $v_P(f - u) \geq 0$, for all places $P \notin S$. Therefore, we have either $f - u = 0$ or $H(f - u) < H(f)$. This concludes our proof.\qed
\end{proof}

Now assume \emph{(b)} and let $S := \{P_1, P_2\}$ be the set of infinite places of $F | K$. Then both $P_1$ and $P_2$ are of degree one, so $F$ is a rational function field over $K$. The integral closure of $K[x]$ in $F$ is exactly the ring of $S$-integers $\O_S$ of $F$, whence \emph{(a)} follows from Proposition \ref{s_integers_rational_function_field}.

\subsection{\emph{(a)} implies \emph{(b)}}
If $K$ is not the full constant field of $F | K$ then $F | K(x)$ is a constant field extension (since it is of degree $2$). Thus, $F = \tilde{K}(x)$, where $\tilde{K}$ is the full constant field of $F | K$. Since then $\O_F = \tilde{K}[x]$, the units of $\O_F$ are constants, so $u(\O_F) = \infty$. Therefore, $K$ is the full constant field of $F | K$. We treat the cases of even and odd characteristic separately.

\subsubsection{Odd characteristic}
Let $p \geq 3$ be the characteristic of $K$. In this case, we always have $F = K(x, y)$, for some $y$ in $F$, satisfying an equation 
$$y^2 = f(x)\text,$$
with a separable polynomial $f \in K[X] \setminus K$. We have $\O_F = K[x, y]$, since the separability of $f$ implies non-singularity of the affine curve $Y^2 = f(X)$. 

The following two lemmata use the notation of the preceding paragraph. The first one is the function field analogue of Lemma 1 from \cite{Belcher1974} and Theorem 7 from \cite{Ashrafi2005}.

\begin{lemma}\label{generated_by_units}
The ring of integers $\O_F$ is generated by units as a $K[x]$-module if and only if there is some $\mu \in K^*$ such that $f + \mu$ is a square in $K[X]$. 

In this case, the unit group $\O_F^*$ is of rank $1$ and there is a fundamental unit of the form $a(x) + y$, for some $a \in K[X]$.
\end{lemma}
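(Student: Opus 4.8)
The plan is to use the free $K[x]$-module structure $\O_F = K[x] \oplus K[x]y$ (valid since $\O_F = K[x,y]$ and $y^2 = f \in K[x]$) together with the norm form. For $u = a + by$ with $a,b \in K[x]$ one has $N_{F|K(x)}(u) = a^2 - b^2 f \in K[x]$, and since $\O_F$ is the integral closure of $K[x]$ in $F$, $u \in \O_F^*$ if and only if $a^2 - b^2 f \in K^*$. The ``if'' direction is then immediate: given $\mu \in K^*$ with $f + \mu = h^2$, $h \in K[X]$, the element $h + y$ has norm $h^2 - f = \mu \in K^*$, hence is a unit, and from $y = (h+y) - h$ we get $\O_F = K[x]\cdot 1 + K[x]\cdot(h+y)$, so $\O_F$ is generated as a $K[x]$-module by the two units $1$ and $h+y$.

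For the converse, assume $\O_F$ is generated by units as a $K[x]$-module. Since $\O_F \neq K[x]$, some generating unit lies outside $K(x)$ (units of $\O_F$ inside $K(x)$ being constants), so there is a non-constant unit; this forces the infinite place of $K(x)$ to split into two places $P_1, P_2$ of $F$, as otherwise every unit would have divisor supported at the single place over infinity, forcing it to be constant. The map $e \colon \O_F^* \to \Z$, $u \mapsto v_{P_1}(u)$, is then a homomorphism with kernel $K^*$ and image a nontrivial subgroup $e_0\Z$; fix a unit $\varepsilon = a_0 + b_0 y$ with $e(\varepsilon) = e_0$, so $b_0 \neq 0$ (as $\varepsilon \notin K(x)$), and every unit of $\O_F$ has the form $c\,\varepsilon^n$ with $c \in K^*$, $n \in \Z$. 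In particular $\O_F^*/K^* \cong \Z$, so the unit rank is $1$; once we know $b_0 \in K^*$, the associate $b_0^{-1}\varepsilon = a_0 b_0^{-1} + y$ is a fundamental unit of the required shape $a(x) + y$.

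The heart of the argument is to prove that $b_0$ is a constant. Writing $\varepsilon^n = A_n + B_n y$ with $A_n, B_n \in K[x]$, the identity $\varepsilon^{n+1} = (A_n + B_n y)(a_0 + b_0 y)$ together with $y^2 = f$ gives the recursion $B_{n+1} = A_n b_0 + B_n a_0$; since $B_1 = b_0$ and $B_0 = 0$, an easy induction (applied also to $\varepsilon^{-1}$, whose $y$-coefficient is again a scalar multiple of $b_0$) shows $b_0 \mid B_n$ in $K[x]$ for all $n \in \Z$. Now take units $u_1, \dots, u_r$ generating $\O_F$ over $K[x]$ and write $u_i = c_i\varepsilon^{n_i}$, so the $y$-coefficient of $u_i$ is $c_i B_{n_i}$. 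The $K[x]$-linear projection $\O_F \to K[x]$, $a + by \mapsto b$, is surjective (it sends $y$ to $1$), so the elements $c_i B_{n_i}$ generate the unit ideal of $K[x]$; since $b_0$ divides each of them, $b_0 \in K^*$. Finally, from $N(\varepsilon) = a_0^2 - b_0^2 f =: \mu_0 \in K^*$ we obtain $f + \mu_0 b_0^{-2} = (a_0 b_0^{-1})^2$, a square in $K[X]$ because $a_0 b_0^{-1} \in K[X]$; thus $\mu := \mu_0 b_0^{-2} \in K^*$ does the job.

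The main obstacle I expect is precisely this last step: the observation that the $y$-coefficient of every unit is divisible by that of the fundamental unit is what converts the module-theoretic hypothesis into the arithmetic condition on $f$, and it relies on first having pinned the unit group down as $K^* \times \langle \varepsilon\rangle$ (splitting of the infinite place and the valuation homomorphism $e$), which, while routine, must be in place to justify the clean recursion for the $B_n$.
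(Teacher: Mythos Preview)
Your argument is correct and follows essentially the same route as the paper. The ``if'' direction is identical. For the converse, the paper invokes the function-field Dirichlet unit theorem to conclude $\O_F^*/K^*$ has rank~$1$, whereas you prove this directly by observing that a non-constant unit forces the infinite place to split and then using the valuation homomorphism $e$; both arrive at the same structure $\O_F^* = K^* \times \langle\varepsilon\rangle$. The heart of both proofs is then the same: writing $\varepsilon^n = A_n + B_n y$, one has $b_0 \mid B_n$ for all $n$, and since the $y$-coefficients of a generating set of units must span $K[x]$, this forces $b_0 \in K^*$. The paper phrases this last step as ``expand $y$ as a $K[x]$-combination of powers of $\varepsilon$ and compare the coefficient of $y$,'' while you phrase it via surjectivity of the projection $a+by \mapsto b$; these are the same observation. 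Your self-contained treatment of the unit rank is a mild bonus, avoiding the citation, at the cost of anticipating the splitting of the infinite place that the paper establishes separately.
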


\begin{proof}
Our proof is basically the same as Belcher's proof of the number field case. First, assume that $f + \mu = g^2$, for some $\mu \in K^*$ and $g \in K[X]$. This implies that 
$$(g(x) + y)(g(x) - y) = g(x)^2 - f(x) = \mu \in K^*\text,$$
whence $g(x) + y$, $g(x) - y$ are units in $\O_F$. Therefore, $y = (g(x) + y) - g(x) \cdot 1$ is a $K[x]$-linear combination of units of $\O_F$, and since $\O_F$ is generated by $\{1, y\}$ as a $K[x]$-module, we conclude that $\O_F$ is generated as a $K[x]$-module by its units.

Now assume that $\O_F$ is generated by its units as a $K[x]$-module. By Dirichlet's unit theorem (for a version that holds in global function fields see Proposition 14.2 from \cite{Rosen2002}), the group $\O_F^*/K^*$ is a free abelian group of rank $0$ or $1$. Rank $0$ can not happen, since then the group of units in $\O_F$ would be exactly $K^*$, which generates only $K[x]$ as a $K[x]$-module. Therefore, we have a fundamental unit $a(x) + b(x) y$, with some $a$, $b \in K[X]$, $b$ monic, and every unit of $\O_F$ is of the form
$$\lambda (a(x) + b(x) y)^n\text,$$
with some constant $\lambda \in K^*$ and some integer $n$. Since the norm of $a(x) + b(x)y$ is a unit of $K[x]$, hence an element of $K^*$, we have 
$$(a(x) + b(x) y)^{-1} = \kappa (a(x) - b(x) y)\text,$$
for some $\kappa \in K^*$. Let us write $y$ as a $K[x]$-linear combination of units:
$$y = g_0(x) + \sum_{i = 1}^{k_1} g_i(x) (a(x) + b(x) y)^{n_i} + \sum_{i = 1}^{k_2} h_i(x) (a(x) - b(x) y)^{m_i}\text,$$
where $k_1$, $k_2$ and all $n_i$, $m_i$ are positive integers, and all $g_i$, $h_i \in K[X]$. Expanding the right-hand side yields an equation of the form
$$y = g(x) + h(x) b(x) y\text,$$
with polynomials $g$, $h \in K[X]$. By comparing the coefficient of $y$, we get $b(x) \in K[x]^* = K^*$, whence $b = 1$. Since the norm of $a(x) + y$ is a unit in $K[x]$, we get $a^2 - f = \mu \in K^*$, and $f$ is of the desired form.\qed
\end{proof}

\begin{lemma}\label{degrees_of_units}
Let $a \in K[X]$, such that $a(x) + y$ is a unit of $\O_F$. For any non-negative integer $n$, define polynomials $a_n$, $b_n \in K[X]$ via $a_n(x) + b_n(x) y := (a(x) + y)^n$. Then $\deg f$ is even, and for every positive integer $n$, we have 
\begin{equation}\label{degrees}\deg a_n = n (\deg f)/2\quad\text{ and }\quad\deg b_n = (n - 1)(\deg f)/2\text.\end{equation}
\end{lemma}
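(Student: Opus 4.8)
The plan is to reduce everything to the two-term linear recursion satisfied by the coefficient polynomials $a_n$, $b_n$, and then to track degrees \emph{and} leading coefficients by a single induction; the only delicate point is a potential cancellation of top-degree terms, which is ruled out by $\operatorname{char} K \neq 2$.

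First I would record the basic data. Since $a(x)+y$ is a unit of $\O_F$, its norm $N_{F|K(x)}(a(x)+y) = (a(x)+y)(a(x)-y) = a(x)^2 - f(x)$ lies in $K[x]^* = K^*$ — this is exactly the computation already used in the proof of Lemma \ref{generated_by_units}. Write $a^2 - f = \mu \in K^*$. Then $\deg f = 2\deg a$, so $\deg f$ is even; set $d := \deg a = (\deg f)/2$, and note $d \geq 1$ because $f \notin K$. If $\lambda \in K^*$ denotes the leading coefficient of $a$, then $f = a^2 - \mu$ has leading coefficient $\lambda^2$ (as $2d > 0$).

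Next, expanding $(a_n(x) + b_n(x)y)(a(x)+y) = (a_n a + b_n f) + (a_n + a b_n)y$ and using that $\{1,y\}$ is a $K[x]$-basis of $\O_F$, I get the recursion $a_{n+1} = a a_n + f b_n$ and $b_{n+1} = a_n + a b_n$, with $a_0 = 1$, $b_0 = 0$, hence $a_1 = a$, $b_1 = 1$. I would then prove by induction on $n \geq 1$ the sharper statement that $\deg a_n = nd$ with leading coefficient $2^{n-1}\lambda^n$, and $\deg b_n = (n-1)d$ with leading coefficient $2^{n-1}\lambda^{n-1}$. In the inductive step, $a a_n$ and $f b_n$ both have degree $(n+1)d$ and the same leading coefficient $2^{n-1}\lambda^{n+1}$, so their sum has degree $(n+1)d$ and leading coefficient $2^n\lambda^{n+1} \neq 0$; likewise $a_n$ and $a b_n$ both have degree $nd$ and the same leading coefficient $2^{n-1}\lambda^n$, so $b_{n+1}$ has degree $nd$ and leading coefficient $2^n\lambda^n$. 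Specializing the degree assertions to $d = (\deg f)/2$ gives $\deg a_n = n(\deg f)/2$ and $\deg b_n = (n-1)(\deg f)/2$, as required.

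The one real obstacle is precisely this cancellation question: a priori the top-degree terms of $a a_n$ and of $f b_n$ might annihilate each other, which would make $\deg a_{n+1}$ drop below $(n+1)d$. Carrying the leading coefficients along in the induction — rather than only the degrees — is what resolves it, and this is exactly the point at which the hypothesis $p = \operatorname{char} K \geq 3$ enters, through $2^{n} \neq 0$ in $K$. (Alternatively one could pass to the completion $K((1/x))$ of $K(x)$ at its infinite place, where $f = \lambda^2 x^{2d}(1 + \cdots)$ has a square root $\tilde y = \lambda x^d + \cdots$ since $\operatorname{char} K \neq 2$, write $a_n = \tfrac12\bigl((a+\tilde y)^n + (a-\tilde y)^n\bigr)$ with $a - \tilde y = \mu(a+\tilde y)^{-1} = O(x^{-d})$, and read off $\deg a_n$ and $\deg b_n$ from the dominant term $(a+\tilde y)^n = (2\lambda)^n x^{nd} + \cdots$; but the recursion yields the cleanest self-contained argument.)
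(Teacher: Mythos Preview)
Your proof is correct and takes a genuinely different route from the paper's. Both arguments start from the same recursion $a_{n+1} = a a_n + f b_n$, $b_{n+1} = a_n + a b_n$ and the observation $a^2 - f = \mu \in K^*$, but they diverge from there. You strengthen the induction hypothesis to track leading coefficients as well as degrees, which directly rules out top-degree cancellation at each step and yields the result in a single pass; the hypothesis $p \neq 2$ enters via $2^n \neq 0$. The paper instead exploits the norm identity $a_n^2 - b_n^2 f \in K^*$ to obtain $\deg b_n = \deg a_n - (\deg f)/2$ and matching leading coefficients of $a_n^2$ and $b_n^2 f$, then establishes the degree formula first at powers of $2$ by analyzing the squaring $(a_{2^l} + b_{2^l} y)^2$ (this is where $p \neq 2$ is used, to ensure $a_{2^l}^2 + b_{2^l}^2 f$ does not drop in degree), and finally squeezes a general $n$ between consecutive powers of $2$ using the one-step inequality $\deg a_{n+1} \leq \deg a_n + (\deg f)/2$. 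Your approach is shorter and yields extra information (the explicit leading coefficients $2^{n-1}\lambda^n$ and $2^{n-1}\lambda^{n-1}$); the paper's approach foregrounds the norm relation, which connects more visibly with the surrounding material on units.
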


\begin{proof}
Induction on $n$ proves that $a_n$, $b_n$ are given by the recursive formulas
\begin{equation}\label{a_n_b_n_formula}a_{n+1} = a a_n + b_n f\quad\text{ and }\quad b_{n+1} = a b_n + a_n\text,\end{equation}
with starting values $a_0 = 1$, $b_0 = 0$.

Since $N(a(x) + y) = a(x)^2 - f$ is a constant, it follows that $f$ is of even degree, and $\deg a = (\deg f)/2$. From $a_n(x)^2 - b_n(x)^2 f = N(a(x) + y)^n \in K^*$ we get, for all positive integers $n$,
\begin{equation}\label{degrees_same}\deg b_n = \deg a_n - (\deg f)/2\text,\end{equation}
and
\begin{equation}\label{leading_coeff}\text{the leading coefficients of $a_n^2$ and $b_n^2 f$ coincide.} \end{equation}

By \eqref{degrees_same} and the recursion formulas \eqref{a_n_b_n_formula},
\begin{equation}\label{degrees_less_equal}\deg a_{n+1} \leq \deg a_n + (\deg f)/2\quad\text{and}\quad \deg b_{n+1} \leq \deg b_n + (\deg f)/2\text, \end{equation}
for all positive integers $n$.

We first prove \eqref{degrees} for the cases where $n$ is a power of $2$. We have already seen that the assertion holds for $n = 2^0 = 1$. Since 
$$(a_{2^{l+1}}(x) + b_{2^{l+1}}(x) y) = (a(x) + y)^{2^{l+1}} = (a_{2^l}(x) + b_{2^l}(x) y)^2\text,$$
we get
$$a_{2^{l+1}} = a_{2^l}^2 + b_{2^l}^2 f\quad\text{ and }\quad b_{2^{l+1}} = 2 a_{2^l} b_{2^l}\text.$$
By \eqref{degrees_same} and by induction, we have $\deg a_{2^l}^2 = \deg b_{2^l}^2 f = 2^{l+1} (\deg f)/2$. Assertion \eqref{leading_coeff} and the fact that $p \neq 2$ imply that $\deg a_{2^{l+1}} = 2^{l+1}(\deg f)/2$. Also,
$$\deg b_{2^{l+1}} = \deg a_{2^l} + \deg b_{2^l} = (2^{l+1}-1)(\deg f)/2\text.$$ 

Now let $n$ be an arbitrary positive integer and find the positive integer $l$, such that $2^{l-1} \leq n < 2^l$. We already know that 
$$\deg a_{2^{l-1}} = 2^{l-1}(\deg f)/2\quad\text{ and } \deg a_{2^l} = 2^l (\deg f)/2\text.$$
By $(n - 2^{l-1})$ applications of \eqref{degrees_less_equal}, we get $\deg a_n \leq n (\deg f)/2$. Suppose that we have $\deg a_n < n (\deg f)/2$. Then $(2^l - n)$ applications of \eqref{degrees_less_equal} lead to $\deg a_{2^l} < 2^l (\deg f)/2$, a contradiction.

This and \eqref{degrees_same} yield the desired result.\qed
\end{proof}

Assume \emph{(a)}. We have already seen that $K$ is the full constant field of $F | K$. Clearly, $F$ is generated by its units as a $K[x]$-module. Now Lemma \ref{generated_by_units} implies that $\deg f$ is even, that the unit group $\O_F^*$ is of rank $1$, and that a fundamental unit is of the form $a(x) + y$, for some $a \in K[X]$.

Let $a_n$, $b_n$ be as in Lemma \ref{degrees_of_units}. Then all units of $\O_F$ are given by
\begin{equation}\label{form_of_units}\lambda (a_n(x) + b_n(x) y)\text,\quad\text{ and }\quad\lambda (a_n(x) - b_n(x) y)\text,\end{equation}
for $\lambda \in K^*$ and non-negative integers $n$. Every element of $K[x]$ is a sum of units and can thus be written as a $K$-linear combination of the $a_n(x)$. Since the degrees of all $a_n$ are different from each other, and all divisible by $(\deg f)/2$, it follows that the degree of every polynomial in $K[X]$ is divisible by $(\deg f)/2$ as well, whence $\deg f = 2$. Therefore, the genus of $F | K$ is $0$. It remains to show that the infinite place of $K(x)$ splits into two places of $F | K$. Let $S$ be the set of places of $F | K$ lying over the infinite place of $K(x)$. Then $\O_F = \O_S$, the ring of $S$-integers of $F$. By Proposition 14.2 from \cite{Rosen2002}, the unit group $\O_S^*$ is of rank $|S|-1$. We already know that the rank of $\O_F^*$ is $1$, hence the infinite place of $K(x)$ splits into two places of $F | K$.

\subsubsection{Characteristic $2$}
The only thing left to consider is the case where $K$ is of characteristic $2$. We have already seen that $K$ is the full constant field of $F | K$. Let $g$ be the genus of $F | K$ and assume that 
\begin{equation}\label{genus}g > 0\end{equation}
or that 
\begin{equation}\label{not_split}\text{the infinite place of $K(x)$ is inert or ramified in $F | K$.}\end{equation}
We need to show that not every element of $\O_F$ is a sum of units. This is true if \eqref{not_split} holds. Indeed, we have already seen that, by Dirichlet's unit theorem, the unit group $\O_F^*$ is of rank $s-1$, where $s$ is the number of places of $F | K$ lying over the infinite place of $K(x)$. If there is only one such place then this rank is $0$, whence the unit group $\O_F^*$ only consists of torsion elements. Then $u(\O_F) = \infty$, since the torsion subgroup of $\O_F^*$ is exactly $K^*$, the group of non-zero constants.

Assume from now on that \eqref{genus} holds and \eqref{not_split} does not hold. Then $F | K(x)$ must be a separable extension, since otherwise it is purely inseparable (as it is of degree $2$) and therefore every place is ramified. Separable quadratic extension fields $F$ of $K(x)$ with full constant field $K$ and of genus $g > 0$, such that the infinite place of $K(x)$ splits in $F | K$, can always be written as $F = K(x, y)$, for some $y \in F$ that satisfies an equation 
\begin{equation}\label{char_2_curve}y^2 + B(x) y + C(x) = 0\text,\end{equation}
with polynomials $B$, $C \in K[X] \setminus \{0\}$ having the following properties: The polynomial $B$ is monic, and every prime factor of $B$ is a simple prime factor of $C$. Moreover, we have $\deg B = g + 1$ and $\deg C < 2 g + 2$. This is a slightly modified version \cite[Theorem 1]{LeBrigand2005} of the Hasse normal form for Artin-Schreier extensions \cite[p. 38]{Hasse1935}.

Let us first show that $\O_F = K[x, y]$. This holds if the affine curve given by \eqref{char_2_curve} is non-singular. Let 
$$B = \prod_{i=1}^r B_i^{n_i}$$ be the prime factor decomposition of $B$, with pairwise distinct monic irreducible polynomials $B_i \in K[X]$. Then the polynomial $C$ is of the form
$$C = D \prod_{i=1}^r B_i\text,$$
with some non-zero polynomial $D \in K[X]$ that is not divisible by any $B_i$. Put $G = Y^2 + B(X) Y + C(X) \in K[X, Y]$. The partial derivatives of $G$ are
$$G_X = B'(X) Y + C'(X)\quad\text{ and }\quad G_Y = B(X)\text.$$
Suppose that there are elements $a$, $b$ in some algebraic extension of $K$, such that $G(a, b) = G_X(a, b) = G_Y(a, b) = 0$. Since $B(a) = 0$, there is some $1 \leq k \leq r$ with $B_k(a) = 0$. Therefore, $C(a) = 0$, and thus $b = 0$. Then $G_X(a, 0) = 0$ implies
$$0 = C'(a) = D'(a) \prod_{i=1}^rB_i(a) + D(a)\sum_{i=1}^r B_i'(a) \prod_{j \neq i}B_j(a) = D(a)B_k'(a) \prod_{j \neq k} B_j(a)\text.$$
However, since $D$, $B_k'$, and all $B_j$, for $j \neq k$, are relatively prime to $B_k$, the above product is not $0$, a contradiction. Therefore, the affine curve given by \eqref{char_2_curve} is non-singular, and $\O_F = K[x, y]$.

Note that the conjugate of $y$ over $K(x)$ is $y + B(x)$. The following lemmata use the notation established in \eqref{char_2_curve}. The first one is the analogous result of Lemma \ref{generated_by_units}.

\begin{lemma}\label{generated_by_units_char_2}
The ring of integers $\O_F$ is generated by units as a $K[x]$-module if and only if there is some $\mu \in K^*$ such that the polynomial $Y^2 + B(x) Y + C(x) + \mu \in K[x][Y]$ has a root in $K[x]$. 

In this case, the unit group $\O_F^*$ is of rank $1$, and there is some polynomial $a \in K[X]$ such that $a(x) + y$ is a fundamental unit.
\end{lemma}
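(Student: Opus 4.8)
The plan is to follow the proof of Lemma~\ref{generated_by_units} (Belcher's argument), adapting it to the Artin--Schreier situation in which the conjugate of $y$ over $K(x)$ is $y + B(x)$ rather than $-y$, and $y^2 = B(x)y + C(x)$. The one computation to carry out first is the norm of a general element $a(x) + b(x)y$ with $a, b \in K[X]$: using the conjugate $\bigl(a(x) + b(x)B(x)\bigr) + b(x)y$ and $y^2 = B(x)y + C(x)$, one gets, in characteristic $2$,
$$N\bigl(a(x) + b(x)y\bigr) = a(x)^2 + a(x)b(x)B(x) + b(x)^2 C(x)\text.$$
In particular $a(x) + y$ is a unit of $\O_F$ if and only if $a(x)^2 + a(x)B(x) + C(x) \in K^*$, which (since $-1 = 1$) says exactly that $a$ is a root of $Y^2 + B(x)Y + C(x) + \mu$ for some $\mu \in K^*$. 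This already identifies the condition in the lemma.

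For the implication ``root exists $\Rightarrow$ generated by units'', suppose $a \in K[x]$ is a root of $Y^2 + B(x)Y + C(x) + \mu$ with $\mu \in K^*$. Then $N(a(x) + y) = \mu \in K^*$, so $a(x) + y$ is a unit of $\O_F$, and since $y = (a(x) + y) + a(x)\cdot 1$ is a $K[x]$-linear combination of the units $a(x) + y$ and $1$, while $\O_F = K[x,y] = K[x]\cdot 1 + K[x]\cdot y$, the ring $\O_F$ is generated by its units as a $K[x]$-module.

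For the converse, assume $\O_F$ is generated by its units as a $K[x]$-module. By Dirichlet's unit theorem (Proposition~14.2 of \cite{Rosen2002}), $\O_F^*/K^*$ is free abelian of rank $0$ or $1$; rank $0$ is impossible, since then the units would be exactly $K^*$, which generates only $K[x] \neq \O_F$. Hence the rank is $1$; pick a fundamental unit $a(x) + b(x)y$ with $a, b \in K[X]$, $b$ monic (normalize by a constant; $b \neq 0$ because a fundamental unit is not a constant). Now consider $\O := K[x] + K[x]\,b(x)y \subseteq \O_F$. Using $y^2 = B(x)y + C(x)$ one checks that $\O$ is closed under multiplication, i.e. is a $K[x]$-order; and by the norm formula above, $N(a(x) + b(x)y) \in K^*$ and the conjugate $\bigl(a(x)+b(x)B(x)\bigr) + b(x)y$ lies in $\O$, so $(a(x)+b(x)y)^{-1} \in \O$. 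Therefore every unit $\lambda(a(x)+b(x)y)^{\pm n}$ lies in $\O$, so the $K[x]$-module generated by the units of $\O_F$ is contained in $\O$, forcing $\O = \O_F$. But $y \in \O_F = K[x] + K[x]\,b(x)y$ forces $b \mid 1$, so $b \in K^*$ and hence $b = 1$; the fundamental unit has the form $a(x) + y$, and $N(a(x)+y) = a(x)^2 + a(x)B(x) + C(x)$ is a unit of $K[x]$, i.e. some $\mu \in K^*$, so $a$ is a root of $Y^2 + B(x)Y + C(x) + \mu$. The final assertions follow from the same argument: whenever the equivalent conditions hold, $\O_F$ is generated by its units, so the reasoning just given shows $\O_F^*$ has rank $1$ with a fundamental unit of the form $a(x) + y$.

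I expect the only non-routine point to be verifying cleanly that $\O = K[x] + K[x]\,b(x)y$ is a subring of $\O_F$ containing all of its units --- this is where the characteristic-$2$ shape of the extension (conjugate $y + B(x)$, relation $y^2 = B(x)y + C(x)$) genuinely enters, replacing the explicit recursion used in Lemma~\ref{degrees_of_units}; everything else is a direct transcription of the odd-characteristic argument.
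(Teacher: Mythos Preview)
Your proof is correct and follows essentially the same route as the paper: the ``root exists $\Rightarrow$ generated by units'' direction is identical, and for the converse both you and the paper invoke Dirichlet's unit theorem to get a fundamental unit $a(x)+b(x)y$ with $b$ monic, then argue that $b=1$. The only cosmetic difference is that where the paper writes $y$ as a $K[x]$-linear combination of powers of the fundamental unit and its conjugate and ``compares the coefficient of $y$'', you package the same computation by observing that $\O = K[x] + K[x]\,b(x)y$ is a subring containing every unit, hence equals $\O_F$; both arguments rest on the same closure fact (that $(by)^2 = b^2 B y + b^2 C$ keeps the $y$-coefficient divisible by $b$). Your remark about ``replacing the explicit recursion used in Lemma~\ref{degrees_of_units}'' is slightly off, since the paper's proof of this lemma does not use that recursion either---the recursion only appears in the subsequent degree lemma.
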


\begin{proof}
The proof is similar to the proof of Lemma \ref{generated_by_units}. Assume first that there is some $\mu \in K^*$ and a root $a(x) \in K[x]$ of $Y^2 + B(x) Y + C(x) + \mu$. Then
$$(a(x) + y)(a(x) + B(x) + y) = a(x)^2 + a(x)B(x) + C(x) = \mu \in K^*\text,$$
whence $a(x) + y$ is a unit of $K[x, y] = \O_F$. Therefore, $y = (a(x) + y) + a(x)\cdot 1$ is a $K[x]$-linear combination of units. Since $\O_F$ is generated by $\{1, y\}$ as a $K[x]$-module, it is generated by its units as a $K[x]$-module.

Now assume that the ring of integers $\O_F$ is generated by its units as a $K[x]$-module. The same argument as in the proof of Lemma \ref{generated_by_units} shows that the unit group $\O_F^*$ is of rank $1$, and that there is a fundamental unit $a(x) + b(x) y$, with polynomials $a$, $b \in K[X]$, $b$ monic. Every element of $\O_F^*$ is of the form
$$\lambda (a(x) + b(x) y)^n\text,$$
with $\lambda \in K^*$ and $n \in \Z$. Since the norm of $a(x) + b(x) y$ is in $K^*$, we have 
$$(a(x) + b(x) y)^{-1} = \kappa (a(x) + b(x) B(x) + b(x) y)\text,$$
for some $\kappa \in K^*$. Let us express $y$ as a $K[x]$-linear combination of units:
$$y = g_0(x) + \sum_{i = 1}^{k_1} g_i(x) (a(x) + b(x) y)^{n_i} + \sum_{i = 1}^{k_2} h_i(x) (a(x) + b(x) B(x) + b(x) y)^{m_i}\text,$$
with positive integers $k_1$, $k_2$, $n_i$, $m_i$, and polynomials $g_i$, $h_i \in K[X]$. By comparing the coefficient of $y$, we get $b(x) \in K[x]^* = K^*$, whence $b = 1$. Since the norm of $a(x) + y$ is some $\mu \in K^*$, we have
$$\mu = (a(x) + y)(a(x) + B(x) + y) = a(x)^2 + a(x) B(x) + C(x)\text,$$
as desired.\qed
\end{proof}

Next, we prove an analogue of Lemma \ref{degrees_of_units}.

\begin{lemma}\label{degrees_of_units_char_2}
Let $a \in K[X]$, such that $a(x) + y$ is a unit of $\O_F$. For any non-negative integer $n$, define polynomials $a_n$, $b_n \in K[x]$ via $a_n(x) + b_n(x) y := (a(x) + y)^n$. Then we have, for every positive integer $n$,
\begin{equation}\label{degrees_char_2}\deg a_n \leq n \deg B\quad\text{ and }\quad\deg b_n = (n - 1)\deg B\text.\end{equation}
\end{lemma}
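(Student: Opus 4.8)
The plan is to follow the template of the proof of Lemma~\ref{degrees_of_units}, but to work with a \emph{linear} three-term recurrence for $a_n$ and $b_n$ rather than with the obvious two-term analogue of \eqref{a_n_b_n_formula}; this makes the degree bookkeeping essentially automatic. Set $u := a(x) + y$. Since $\O_F = K[x] \oplus K[x]y$ (the affine curve \eqref{char_2_curve} being non-singular), each power $u^n$ has a unique expression $u^n = a_n(x) + b_n(x)\,y$ with $a_n, b_n \in K[x]$, and $a_0 = 1$, $b_0 = 0$, $a_1 = a$, $b_1 = 1$.

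First I would determine the minimal polynomial of $u$ over $K(x)$. The conjugate of $y$ is $y + B(x)$, so $\mathrm{Tr}(u) = u + (u + B(x)) = B(x)$ and $N(u) = u\,(u + B(x)) = a(x)^2 + a(x)B(x) + C(x)$; since $u$ is a unit of $\O_F$, its norm is a unit of $K[x]$, hence a constant $\mu \in K^*$ (and $a \neq 0$, as $C$ is non-constant). Therefore $u^2 = B(x)\,u + \mu$ (characteristic $2$). Multiplying by $u^{n-1}$ gives $u^{n+1} = B(x)\,u^n + \mu\,u^{n-1}$ for all $n \geq 1$, and comparing coefficients in the $K(x)$-basis $\{1, y\}$ of $F$ yields
$$a_{n+1} = B(x)\,a_n + \mu\,a_{n-1} \qquad\text{and}\qquad b_{n+1} = B(x)\,b_n + \mu\,b_{n-1}\text.$$

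Next I would collect the degree facts needed. Put $\beta := \deg B = g + 1 \geq 2$. From $\mu = a^2 + aB + C \in K^*$ and $\deg C < 2g + 2 = 2\beta$ it follows that $\deg a \leq \beta$, since otherwise $a^2$ would produce a term of degree $2\deg a > 2\beta$ that neither $aB$ nor $C$ can cancel. The bound $\deg a_n \leq n\beta$ is then immediate by induction on $n$ from the recurrence, because $\deg(B a_n) \leq (n+1)\beta$ and $\deg(\mu a_{n-1}) \leq (n-1)\beta$. For the equality $\deg b_n = (n-1)\beta$ I would induct on $n$, with base cases $b_1 = 1$ and $b_2 = B$ (degrees $0$ and $\beta$). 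If $\deg b_{n-1} = (n-2)\beta$ and $\deg b_n = (n-1)\beta$, then $B b_n$ has degree $n\beta$ with leading coefficient equal to that of $b_n$ (as $B$ is monic), while $\mu b_{n-1}$ has degree only $(n-2)\beta < n\beta$; hence no cancellation occurs in $b_{n+1} = B b_n + \mu b_{n-1}$ and $\deg b_{n+1} = n\beta$, which closes the induction.

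The only point that could cause trouble is precisely the one that made Lemma~\ref{degrees_of_units} delicate, namely ruling out cancellation of the top-degree terms — there this forced the somewhat intricate reduction to powers of $2$, which also relied on $p \neq 2$. Here the difficulty disappears once the right recurrence is in place: since $\mathrm{Tr}(u)$ and $N(u)$ both lie in $K[x]$, the two summands defining $b_{n+1}$ have degrees differing by exactly $2\beta$, so the term of higher degree is never annihilated. Everything else is routine verification.
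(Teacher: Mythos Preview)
Your argument is correct, and it is genuinely cleaner than the paper's. The paper multiplies by $u=a(x)+y$ at each step and works with the coupled recursion
\[
a_{n+1}=a\,a_n+b_n\,C,\qquad b_{n+1}=a\,b_n+a_n+b_n\,B,
\]
whose degree behaviour depends on how $\deg a$ compares to $\deg B$; this forces a case split ($\deg a<\deg B$ versus $\deg a=\deg B$), each handled by a separate induction on auxiliary (in)equalities. Your idea of using the minimal polynomial $u^2=B(x)\,u+\mu$ instead is the key simplification: because we are in characteristic~$2$, the trace $\mathrm{Tr}(u)=B(x)$ does not see $a$ at all, so the resulting \emph{decoupled} recurrences $a_{n+1}=B\,a_n+\mu\,a_{n-1}$ and $b_{n+1}=B\,b_n+\mu\,b_{n-1}$ have coefficients of degrees $\beta$ and $0$, and the top-degree term in $b_{n+1}$ can never be cancelled. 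This removes exactly the difficulty that in Lemma~\ref{degrees_of_units} required the detour through powers of~$2$, and here eliminates the paper's case distinction entirely. The price is only that you need two initial values ($b_1=1$, $b_2=B$) to start the induction; everything else is, as you say, routine. One cosmetic remark: the observation $\beta\geq 2$ is correct in the ambient context ($g>0$) but is nowhere used in your argument, which already goes through for $\beta\geq 1$.
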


\begin{proof}
Induction on $n$ proves that $a_n$, $b_n$ are given by the recursive formulas
\begin{equation}\label{a_n_b_n_formula_char_2}a_{n+1} = a a_n + b_n C\quad\text{ and }\quad b_{n+1} = a b_n + a_n + b_n B\text,\end{equation}
with starting values $a_0 = 1$, $b_0 = 0$.

Since $\deg C < 2 g + 2 = 2 \deg B$, and
\begin{equation}\label{norm_constant_char_2}a(x)^2 + B(x) a(x) + C(x) = (a(x) + y)(a(x) + B(x) + y) = N(a(x) + y) \in K^*\text,\end{equation}
we get $\deg a \leq \deg B$.

First consider the case where $\deg a < \deg B$. Then $\deg C = \deg B + \deg a$. We use induction to prove the following (in)equalities for every positive integer $n$:
\begin{equation}\label{degrees_char_2_case_1}\deg a_n < \deg b_n + \deg B\quad\text{ and }\quad \deg b_{n+1} = \deg b_n + \deg B\text.\end{equation}
First one checks \eqref{degrees_char_2_case_1} directly for $n = 1$. Assume that both (in)equalities hold for $n$. Then we have $a_{n+1} = a a_n + b_n C$, and $\deg a  + \deg a_n < \deg a + \deg b_n + \deg B = \deg b_n + \deg C$. Therefore,
\begin{align*}\deg a_{n+1} &= \deg b_n + \deg C = \deg b_n + \deg a + \deg B \\ &= \deg b_{n+1} + \deg a < \deg b_{n+1} + \deg B\text.\end{align*}
Similarly,
$$\deg b_{n+2} = \deg (a b_{n+1} + a_{n+1} + b_{n+1} B) = \deg b_{n+1} + \deg B\text.$$
The desired result \eqref{degrees_char_2} now follows by induction from \eqref{degrees_char_2_case_1}. 

Now assume that $\deg a = \deg B$. From \eqref{norm_constant_char_2}, we have $\deg(a^2 + a B) = \deg C < 2 \deg B$, and thus $\deg(a + B) < \deg B = \deg a$. This time, we prove the following equalities for all positive integers $n$:
\begin{equation}\label{degrees_char_2_case_2}\deg a_n = \deg a_{n-1} + \deg a\quad\text{ and }\quad \deg b_n = \deg a_{n-1} \text.\end{equation}
Again, we check the case $n = 1$ directly. Assume \eqref{degrees_char_2_case_2} holds for $n$. Then 
$$\deg b_{n+1} = \deg ((a + B) b_n + a_n) = \deg a_n\text,$$
since $\deg ((a + B) b_n) < \deg a + \deg b_n = \deg a + \deg a_{n-1} = \deg a_n$. Moreover,
$$\deg a_{n+1} = \deg (a a_n + b_n C) = \deg a_n + \deg a\text,$$
since $\deg b_n + \deg C = \deg a_n - \deg a + \deg C < \deg a_n + \deg a$. 

From the first equality of \eqref{degrees_char_2_case_2}, we deduce inductively that $\deg a_n = n \deg B$, whence the second equality of \eqref{degrees_char_2_case_2} implies $\deg b_n = (n-1)\deg B$. \qed
\end{proof}

Suppose that every element of $\O_F$ is a sum of units. Let $a(x) + y$ be the fundamental unit from Lemma \ref{generated_by_units_char_2}, and $a_n$, $b_n$ the polynomials from Lemma \ref{degrees_of_units_char_2}. Then all units of $\O_F$ are of the form 
$$\lambda (a_n(x) + b_n(x) y)\text,\quad\text{ or }\quad\lambda (a_n(x) + b_n(x)B(x) + b_n(x)y)\text,$$
for constants $\lambda \in K^*$ and non-negative integers $n$. Since the degrees of the $b_n$ are all distinct from each other, the only way to represent elements of $K[x]$ as sums of units is as $K$-linear combinations of the
$$(a_n(x) + b_n(x) y) + (a_n(x) + b_n(x) B(x) + b_n(x) y) = b_n(x) B(x)\text.$$
Since $\deg (b_n B) = n \deg B$, and $\deg B = g + 1 > 1$, there is no way to represent $x$ as such a linear combination, which is a contradiction. This completes our proof.

\paragraph*{Added in proof:}
\ There is a simpler way to prove that \emph{(a)} implies \emph{(b)} in Theorem \ref{quadratic_function_fields}, which the author was not aware of when submitting this article. We sketch the argument here: 

Suppose that $u(\O_F) = \omega$. By Dirichlet's unit theorem, the torsion-free part of the unit group of $\O_F$ is of rank at most $1$. Since $\O_F$ is generated by its units as a ring, the rank is $1$. It follows that 
$$\O_F = K[\varepsilon, \varepsilon^{-1}]\text,$$
for some fundamental unit $\varepsilon \in \O_F$. Since the quotient field of $\O_F$ is $F$, we get $F = K(\varepsilon)$, which shows that $F | K$ is of genus $0$ and has full constant field $K$. The fact that the infinite place of $K(x)$ splits into two places of $F | K$ follows in the same way as in Section \ref{proof_quadratic}.

The proof shown in Section \ref{proof_quadratic}, while being significantly longer and more technical than the above argument, has its own merits, especially Lemmata \ref{generated_by_units} and \ref{generated_by_units_char_2}, which show an additional function field analogy of the unit sum number problem in number fields. 

\subsection*{Acknowledgements}
The author is supported by the Austrian Science Foundation (FWF) project S9611-N23.

\bibliographystyle{plain} 
\bibliography{sums_of_units_in_function_fields} 
\noindent Technische Universit\"at Graz\\
Institut f\"ur Analysis und Computational Number Theory \\
Steyrergasse 30, 8010 Graz, Austria\\
E-mail: frei@math.tugraz.at\\
\url{http://www.math.tugraz.at/~frei}
\end{document}